\providecommand \@dotsep{5} \def\listtodoname{List of Todos} \def\listoftodos{\@starttoc{tdo}\listtodoname} 
\patchcmd{\@startsection}{\@afterindenttrue}{\@afterindentfalse}{}{}             %omit indentation of the first paragraph of a section
\patchcmd{\part}{\bfseries}{\bfseries\LARGE}{}{}
\patchcmd{\section}{\scshape}{\bfseries}{}{}\renewcommand{\@secnumfont}{\bfseries} %boldface no smallcaps section and subsection titles with numbers
\patchcmd{\@settitle}{\uppercasenonmath\@title}{\large}{}{}
\patchcmd{\@setauthors}{\MakeUppercase}{}{}{}
\theoremstyle{plain}
\newtheorem{thm}{Theorem}[section]
\newtheorem{lemma}[thm]{Lemma}
\newtheorem{prop}[thm]{Proposition}
\newtheorem{thmA}{Theorem}  %alphabetic theorem counter: Theorem A, Theorem B, ...
\newtheorem{propA}[thmA]{Proposition}  %alphabetic theorem counter: Theorem A, Theorem B, ...
\newtheorem{lemA}[thmA]{Lemma}  %alphabetic theorem counter: Theorem A, Theorem B, ...
\newtheorem*{thm*}{Theorem}
\theoremstyle{definition}
\newtheorem{df}[thm]{Definition}
\newtheorem{rem}[thm]{Remark}
\newtheorem{ex}[thm]{Example}
\theoremstyle{remark}
\DeclareRobustCommand{\gobblefour}[5]{}    % Command \SkipTocEntry for surpressing a section title in TOC
\DeclareFontFamily{OT1}{pzc}{}                                % Script font for small caligraphic letter, like in \cMat
\DeclareFontShape{OT1}{pzc}{m}{it}{<-> s * [1.10] pzcmi7t}{}
\DeclareMathAlphabet{\mathpzc}{OT1}{pzc}{m}{it}
\DeclareSymbolFont{sfoperators}{OT1}{bch}{m}{n} \DeclareSymbolFontAlphabet{\mathsf}{sfoperators} \makeatletter\def\operator@font{\mathgroup\symsfoperators}\makeatother % different font for math operators
\DeclareSymbolFont{cmletters}{OML}{cmm}{m}{it}              
\DeclareSymbolFont{cmsymbols}{OMS}{cmsy}{m}{n}
\DeclareSymbolFont{cmlargesymbols}{OMX}{cmex}{m}{n}
\DeclareMathSymbol{\myjmath}{\mathord}{cmletters}{"7C}     \let\jmath\myjmath %Defining the missing commands: \jmath, \amalg and \coprod
\DeclareMathSymbol{\myamalg}{\mathbin}{cmsymbols}{"71}     
\DeclareMathSymbol{\mycoprod}{\mathop}{cmlargesymbols}{"60}
\DeclareMathSymbol{\myalpha}{\mathord}{cmletters}{"0B}     \let\alpha\myalpha %Greek letters from Computer Modern since the Greek letters from mathptmx are too large
\DeclareMathSymbol{\mybeta}{\mathord}{cmletters}{"0C}      \let\beta\mybeta
\DeclareMathSymbol{\mygamma}{\mathord}{cmletters}{"0D}     \let\gamma\mygamma
\DeclareMathSymbol{\mydelta}{\mathord}{cmletters}{"0E}     \let\delta\mydelta
\DeclareMathSymbol{\myepsilon}{\mathord}{cmletters}{"0F}   \let\epsilon\myepsilon
\DeclareMathSymbol{\myzeta}{\mathord}{cmletters}{"10}      \let\zeta\myzeta
\DeclareMathSymbol{\myeta}{\mathord}{cmletters}{"11}       \let\eta\myeta
\DeclareMathSymbol{\mytheta}{\mathord}{cmletters}{"12}     \let\theta\mytheta
\DeclareMathSymbol{\myiota}{\mathord}{cmletters}{"13}      \let\iota\myiota
\DeclareMathSymbol{\mykappa}{\mathord}{cmletters}{"14}     \let\kappa\mykappa
\DeclareMathSymbol{\mylambda}{\mathord}{cmletters}{"15}    \let\lambda\mylambda
\DeclareMathSymbol{\mymu}{\mathord}{cmletters}{"16}        \let\mu\mymu
\DeclareMathSymbol{\mynu}{\mathord}{cmletters}{"17}        \let\nu\mynu
\DeclareMathSymbol{\myxi}{\mathord}{cmletters}{"18}        \let\xi\myxi
\DeclareMathSymbol{\mypi}{\mathord}{cmletters}{"19}        \let\pi\mypi
\DeclareMathSymbol{\myrho}{\mathord}{cmletters}{"1A}       \let\rho\myrho
\DeclareMathSymbol{\mysigma}{\mathord}{cmletters}{"1B}     \let\sigma\mysigma
\DeclareMathSymbol{\mytau}{\mathord}{cmletters}{"1C}       \let\tau\mytau
\DeclareMathSymbol{\myupsilon}{\mathord}{cmletters}{"1D}   \let\upsilon\myupsilon
\DeclareMathSymbol{\myphi}{\mathord}{cmletters}{"1E}       \let\phi\myphi
\DeclareMathSymbol{\mychi}{\mathord}{cmletters}{"1F}       \let\chi\mychi
\DeclareMathSymbol{\mypsi}{\mathord}{cmletters}{"20}       \let\psi\mypsi
\DeclareMathSymbol{\myomega}{\mathord}{cmletters}{"21}     \let\omega\myomega
\DeclareMathSymbol{\myvarepsilon}{\mathord}{cmletters}{"22}\let\varepsilon\myvarepsilon
\DeclareMathSymbol{\myvartheta}{\mathord}{cmletters}{"23}  \let\vartheta\myvartheta
\DeclareMathSymbol{\myvarpi}{\mathord}{cmletters}{"24}     \let\varpi\myvarpi
\DeclareMathSymbol{\myvarrho}{\mathord}{cmletters}{"25}    \let\varrho\myvarrho
\DeclareMathSymbol{\myvarsigma}{\mathord}{cmletters}{"26}  \let\varsigma\myvarsigma
\DeclareMathSymbol{\myvarphi}{\mathord}{cmletters}{"27}    \let\varphi\myvarphi
\DeclareMathOperator{\Pol}{Poly}
\DeclareMathOperator{\mult}{mult}
\DeclareMathOperator*{\hypersum}{\,\raisebox{-2.2pt}{\larger[2]{$\boxplus$}}\,}
\DeclareMathOperator*{\hyperprod}{\,\raisebox{-1.4pt}{\scalebox{1.5}{$\boxdot$}}\,}
\DeclareMathOperator*{\hypercross}{\,\raisebox{-1.4pt}{\scalebox{1.25}{$\boxtimes$}}\,}
\newcommand\C{{\mathbb C}}
\newcommand\F{{\mathbb F}}
\newcommand\K{{\mathbb K}}
\newcommand\N{{\mathbb N}}
\renewcommand\P{{\mathbb P}}
\newcommand\Q{{\mathbb Q}}
\newcommand\R{{\mathbb R}}
\renewcommand\S{{\mathbb S}}
\newcommand\T{{\mathbb T}}
\newcommand\W{{\mathbb W}}
\newcommand\Z{{\mathbb Z}}
\newcommand\cP{{\mathcal P}}
\newcommand\ord{\textup{ord}}
\newcommand\sign{\textup{sign}}
\renewcommand\geq{\geqslant}
\renewcommand\leq{\leqslant}
\newcommand{\hyperplus}{\mathrel{\,\raisebox{-1.1pt}{\larger[-0]{$\boxplus$}}\,}}
\newcommand{\hyperdot}{\,\raisebox{-0.3pt}{\larger[+1]{$\boxdot$}}\,}
\newcommand{\hypertimes}{\,\raisebox{-1.1pt}{\larger[+0]{$\boxtimes$}}\,}
\newcommand{\arincl}[1]{\ar@{ >->}@<-0,0ex>#1} %inclusion arrow for xy-matrix with better spacing
\title{Descartes' rule of signs, Newton polygons, and polynomials over hyperfields}
\author{Matthew Baker}
\address{\rm Matthew Baker, School of Mathematics, Georgia Institute of Technology, Atlanta, USA}
\email{mbaker@math.gatech.edu}
\author{Oliver Lorscheid}
\address{\rm Oliver Lorscheid, Instituto Nacional de Matem\'atica Pura e Aplicada, Rio de Janeiro, Brazil}
\email{oliver@impa.br}
\begin{document}

\begin{abstract} 
 In this note, we develop a theory of multiplicities of roots for polynomials over hyperfields and use this to provide a unified and conceptual proof of both Descartes' rule of signs and
 Newton's ``polygon rule''.
\end{abstract}

\thanks{{\bf Acknowledgements.} We thank Philipp Jell for pointing out Remark~\ref{rem:infiniteroots}, Ziqi Liu for sharing with us the example from Appendix \ref{subsection: polynomial hyperings are not associative}, and St\'ephane Gaubert and an anonymous referee for several remarks, including explanations on the relations of certain hyperfields with supertropical and symmetrized semirings. We also thank Trevor Gunn, Jaiung Jun, Yoav Len, Sam Payne and Thor Wittich for their comments on an early draft version of this manuscript. The first author was supported by NSF Grant DMS-1529573 and a Simons Foundation Fellowship.}

\maketitle

%%%%%%%%%%%%%%%%%%%%%%%%%%%%%%%%%%%%%%%%%%%%%%%%%%%%%%%%%%%%%%%%%%%%%%%%%%%%%%%%%%%%%%%%%%%%%%%%%%%%%%%%%%%%%%%%%%%%%%%%%%%%%%%%%%%%%%%%%%%%%%%%%%%%%%%%%%%%%%%%%%%%%
%%%%%%%%%%%%%%%%%%%%%%%%%%%%%%%%%%%%%%%%%%%%%%%%%%%%%%%%%%%%%%%%%%%%%%%%%%%%%%%%%%%%%%%%%%%%%%%%%%%%%%%%%%%%%%%%%%%%%%%%%%%%%%%%%%%%%%%%%%%%%%%%%%%%%%%%%%%%%%%%%%%%%

\section*{Introduction}
\label{intro}

% \todo[inline]{The new references (so that we can copy and paste them): \cite{Baccelli-Cohen-Olsder-Quadrat92}, \cite{Connes-Consani11}, \cite{Cunninghame-Green-Meijer80}, 
% \cite{Izhakian-Rowen10a}, \cite{Izhakian-Rowen10b}}
% \cite{Liu19} is https://arxiv.org/abs/1911.09263
% \cite{Gunn19} is https://arxiv.org/abs/1911.12274

% \cite{Einsiedler-Kapranov-Lind06},  Matt: I decided not to include this

Given a real polynomial $p \in \R[T]$, {\em Descartes' rule of signs} provides an upper bound for the number of positive (resp. negative) real roots of $p$ in terms of the signs of the coefficients of $p$.  Specifically, the number of positive real roots of $p$ (counting multiplicities) is bounded above by the number of {\em sign changes} in the coefficients of $p(T)$, and the number of negative roots is bounded above by the number of sign changes in the coefficients of $p(-T)$.  

\medskip

Another classical ``rule'', which is less well known to mathematicians in general but is used quite often in number theory, is {\em Newton's polygon rule}.  This rule concerns polynomials over fields equipped with a {\em valuation}, which is a function $v : K \to \R \cup \{ \infty \}$ satisfying
\begin{itemize}
\item $v(a)=\infty$ if and only if $a=0$
\item $v(ab)=v(a)+v(b)$
\item $v(a+b) \geq \min\{ v(a),v(b) \}$, with equality if $v(a) \neq v(b)$
\end{itemize}
for all $a,b \in K$.

An example is the $p$-adic valuation $v_p$ on $\Q$, where $p$ is a prime number, given by the formula $v_p(s/t) = \ord_p(s) - \ord_p(t)$, where $\ord_p(n)$ is the maximum power of $p$ dividing a nonzero integer $n$.

Another example is the $T$-adic valuation $v_T$ on $k(T)$, for any field $k$, given by $v_T(f/g) = \ord_T(f) - \ord_T(g)$, where $\ord_T(f)$ is the maximum power of $T$ dividing a nonzero polynomial $f \in k[T]$.

\medskip

Given a field $K$, a valuation $v$ on $K$, and a polynomial $p \in K[T]$, Newton's polygon rule provides an upper bound for the number of roots (again counting multiplicities) of $p$ having a given valuation $s$ in terms of the valuations of the coefficients of $p$.  In this case, the rule is more complicated than in the case $K=\R$; the upper bound $\nu_s(p)$ is the length of the projection to the $x$-axis of the unique segment of the {\em Newton polygon} of $p$ having slope $-s$ (if such a segment exists), or zero (if no such segment exists).  (See Definition~\ref{def:NP} for a definition of the Newton polygon of $p$.) If $p$ splits into linear factors over $K$, the upper bound provided by Newton's rule is in fact an equality.

\medskip

The purpose of this note is to provide a conceptual unification of these two similar-looking, and yet seemingly rather different, upper bounds via the theory of {\em hyperfields}.

\medskip

Hyperfields are a generalization of fields where addition is allowed to be multi-valued.
Given a hyperfield $F$ and a polynomial $p$ over $F$ (by which we simply mean a formal expression of the form $\sum_{i=0}^n c_i T^i$ with $c_i \in F$), we will define what it means for an element $a \in F$ to be a {\em root of $p$}, and more generally we will define the {\em multiplicity} of $a$ as a root of $p$.  We denote this multiplicity by  $\mult_a(p)$.

\medskip

In the case of the {\em sign hyperfield} $\S$, we will find that the multiplicity of $1$ as a root of $p \in \S[T]$ is just the number of sign changes in the coefficients of $p$.
And in the case of the {\em tropical hyperfield} $\T$, we will see that the multiplicity of $s$ as a root of $p \in \T[T]$ is precisely $\nu_s(p)$.

\medskip

Moreover, if $K$ is a field (considered as a hyperfield), $f:K\to F$ is a hyperfield homomorphism, and $p \in K[T]$ is a polynomial, our definition of multiplicities will imply that the multiplicity of $a \in F$ as a root of $f(p)$ is at least the sum of the multiplicities $\mult_b(p)$ over all preimages $b \in f^{-1}(a)$.
Applying this fact to the natural homomorphism ${\rm sign} : \R \to \S$ will yield Descartes' rule of signs, and given a valuation $v$ on a field $K$ (which is the same thing as a homomorphism from $K$ to $\T$) we will recover Newton's polygon rule.

\medskip

\subsection*{Content overview}
In section \ref{section: statement of the main results}, we explain the overall idea behind our simultaneous proof of Descartes' rule of signs and Newton's polygon rule. In section~\ref{section: hyperfields}, we give a rigorous definition of hyperfields and a proof of Lemma~\ref{lem: roots} and Proposition \ref{propA}.  The above-mentioned interpretation of the multiplicities of roots over the sign hyperfield is established in section \ref{section: multiplicities over the sign hyperfield}, and for the tropical hyperfield this is worked out in section \ref{section: multiplicities of tropical roots}.

In Appendix \ref{section: polynomial algebras over hyperfields}, we investigate different possible notions of ``polynomial algebra'' over a hyperfield $F$.  We argue that while the older theory of ``additive-multiplicative hyperrings'' leads to a rather badly behaved notion, the second author's theory of ordered blueprints furnishes an efficient and satisfying (at least from a categorical perspective) theory of polynomial algebras over hyperfields.  We also discuss how the theory described in the body of this paper generalizes neatly to ordered blue fields which satisfy a ``reversibility'' axiom.

%%%%%%%%%%%%%%%%%%%%%%%%%%%%%%%%%%%%%%%%%%%%%%%%%%%%%%%%%%%%%%%%%%%%%%%%%%%%%%%%%%%%%%%%%%%%%%%%%%%%%%%%%%%%%%%%%%%%%%%%%%%%%%%%%%%%%%%%%%%%%%%%%%%%%%%%%%%%%%%%%%%%%
%%%%%%%%%%%%%%%%%%%%%%%%%%%%%%%%%%%%%%%%%%%%%%%%%%%%%%%%%%%%%%%%%%%%%%%%%%%%%%%%%%%%%%%%%%%%%%%%%%%%%%%%%%%%%%%%%%%%%%%%%%%%%%%%%%%%%%%%%%%%%%%%%%%%%%%%%%%%%%%%%%%%%

\section{Statement of the main results}
\label{section: statement of the main results}

\subsection*{Introduction to hyperfields}

We already mentioned that hyperfields\footnote{Marc Krasner introduced hyperrings and hyperfields in \cite{Krasner56}, and since then many authors have considered more general versions of these notions. The hyperfields considered in this paper, however, will all be hyperfields in the original sense of Krasner.} are a generalization of fields where addition is allowed to be multi-valued.
Somewhat more precisely, in a hyperfield $F$ addition is replaced by a {\em hyperoperation} $\hyperplus$, which is a map
\[
 \hyperplus: \quad F\times F \quad \longrightarrow \quad \cP(F)
\]
into the power set $\cP(F)$ of $F$.  The multiplication and hyperaddition operations on $F$ are required to satisfy various axioms, the most non-obvious of which is that there should be a distinguished neutral element $0 \in F$ such that for each $x \in F$, there is a unique $-x \in F$ such that $0 \in x \hyperplus (-x)$. We will give a more precise definition in section~\ref{section: hyperfields}; for now we content ourselves with some examples.

The three most important examples of hyperfields, for the purposes of this paper, are the following:

\begin{itemize}
 \item Every field $K$ is tautologically a hyperfield by defining $a\hyperplus b=\{a+b\}$. 
 \item The \emph{sign hyperfield} $\S$ consists of three elements $\{0,1,-1\}$ with the usual multiplication and hyperaddition characterized by the rules $1\hyperplus 1=\{1\}$, $-1\hyperplus -1=\{-1\}$ and $1\hyperplus-1=\{0,1,-1\}$. 
 \item The \emph{tropical hyperfield}
 % \footnote{The tropical hyperfield is so-named because the kind of algebra in which one replaces the product of two numbers with their sum and the sum of two numbers with their minimum is called {\em tropical algebra}.  (The adjective ``tropical'' is in honor of the Hungarian-born Brazilian mathematician Imre Simon, a pioneer in the field.)  Tropical algebra and its geometric counterpart --- tropical geometry --- are now thriving areas of mathematics, see e.g.\ \cite{Katz17}.}
 $\T$ has for its underlying set $\R \cup \{ \infty \}$.  Multiplication in $\T$ is given by addition of (extended) real numbers, and hyperaddition is defined as follows: if $a \neq b$ then $a\hyperplus b = \min(a,b)$, while $a\hyperplus a = \{ c \in \R \; : \; c \geq a \} \cup \{ \infty \}$.  The hyperinverse of $x$ is equal to $x$ for all $x \in \T$.
\end{itemize}

The following hyperfields will also be used later on to give some examples and counterexamples:

\begin{itemize}
 \item The \emph{Krasner hyperfield} $\K$ consists of two elements $\{ 0,1 \}$ with the usual multiplication and hyperaddition characterized by the rule $1\hyperplus 1=\{0, 1\}$.
 \item The \emph{weak sign hyperfield} $\W$ consists of three elements $\{0,1,-1\}$ with the usual multiplication and hyperaddition characterized by the rules $1\hyperplus 1=-1\hyperplus -1 = \{1, -1\}$ and $1\hyperplus-1=\{0,1,-1\}$. 
 \item The \emph{phase hyperfield} $\P$ has for its underlying set $S^1 \cup \{ 0 \}$, where $S^1 = \{ e^{i \theta} \in {\mathbb C} \; | \; 0 \leq \theta < 2\pi \}$ is the complex unit circle. Multiplication on $\P$ is deduced from the multiplication on $\C$, and hyperaddition is characterized by the following rules:
   \begin{itemize}
   \item If $\theta_1 = \theta_2 + \pi$, then $e^{i \theta_1} \hyperplus e^{i \theta_2} = \{ 0, e^{i \theta_1}, e^{i \theta_2} \}$. 
   \item If $\theta_1 < \theta_2 < \theta_1 + \pi$, then $e^{i \theta_1} \hyperplus e^{i \theta_2} = \{ e^{i \theta} \; | \; \theta_1 < \theta < \theta_2 \}$. 
   \end{itemize}
\end{itemize}

\begin{rem} \label{rem:inducedhyperaddition}
All six of these examples are special cases of a general construction of hyperfields as quotients of fields by a multiplicative subgroup, which is described in \cite{Connes-Consani11}.  Let $K$ be a field and let $G$ be a subgroup of $K^\times$. Then the quotient $K/G$ of $K$ by the action of $G$ by (left) multiplication carries a natural structure of a hyperfield: we have $(K/G)^\times=K^\times/G$ as an abelian group and 
 \[
  [a]\hyperplus[b] \ = \ \big\{ \ [c] \ \big| \ c=a'+b'\text{ for some }a'\in[a], b'\in[b] \ \big\} 
 \]
 for equivalence classes $[a]$ and $[b]$ in $K/G$.  
 
 For any field $K$ we have $K = K / \{ 1 \}$ and if $|K|>2$ then $\K = K / K^\times$.
 Similarly, $\S = \R / \R_{>0}$, $\P = \C / \R_{>0}$, and $\W = \F_p / (\F_p^\times)^2$ for any prime $p \geq 7$ with $p \equiv 3 \pmod{4}$.
 The tropical hyperfield $\T$ is also a special case of the quotient construction: if $K$ is any field endowed with a surjective valuation $v : K^\times \to \R$, then $\T = K / v^{-1}(0)$.
\end{rem}

\begin{rem}
 There are examples of hyperfields which do not arise from the construction given in Remark~\ref{rem:inducedhyperaddition}; see, for example, \cite{Massouros85}.
\end{rem}

\subsection*{Roots and multiplicities}

If $p(T)=\sum_{i=0}^n c_iT^i$ is a polynomial with coefficients in a field $K$, an element $a \in K$ is a root of $p$ if and only if either of the following two equivalent conditions is satisfied:
\begin{enumerate}
 \item\label{fieldroot1} $p(a)=0$, i.e., $\sum c_ia^i = 0$.
  \item\label{fieldroot2} $T-a$ divides $p(T)$, i.e., there is a polynomial $q(T)=\sum_{i=0}^{n-1} d_iT^i \in K[T]$ such that $p(T)=(T-a)q(T)$.  
  \end{enumerate}
  
Note that \eqref{fieldroot2} is equivalent to the existence of elements $d_0,\ldots,d_{n-1} \in K$ such that
       \begin{equation}\label{eq:fieldroot}
        c_0=-ad_0, \; \; c_i =  -ad_i + d_{i-1} \; \text{for} \; i=1,\dots, n-1, \; \; \text{and} \; c_{n}=d_{n-1}. \tag{$2'$} 
       \end{equation}

If $F$ is a hyperfield, then in order to define what it means to be a root of a polynomial over $F$ we will generalize conditions \eqref{fieldroot1} and \eqref{eq:fieldroot} by replacing sums with hypersums.  

\begin{lemA}\label{lem: roots}
Let $c_0,\ldots,c_n \in F$.  The following are equivalent for an element $a \in F$:
\begin{enumerate}
 \item\label{root1} $0\in \hypersum c_ia^i$.
 \item\label{root2} There exist elements $d_0,\ldots,d_{n-1} \in F$ such that
        \[
        c_0=-ad_0, \; \; c_i \in  (-ad_i) \hyperplus d_{i-1} \; \text{for} \; i=1,\dots, n-1, \; \; \text{and} \; c_{n}=d_{n-1}.
       \]
\end{enumerate}
\end{lemA}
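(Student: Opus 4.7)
The plan is to prove both implications by unwinding the associativity of the $n$-fold hypersum (a consequence of the hyperfield axioms) together with the reversibility identity $c\in a\hyperplus b\Leftrightarrow b\in c\hyperplus(-a)$, which holds in any hyperfield. The guiding intuition is that the $d_i$'s should be the coefficients of a ``quotient polynomial'' $q(T)=\sum d_iT^i$ with $p(T)=(T-a)q(T)$, and that the intermediate elements $y_i:=a^id_{i-1}$ introduced below record Horner-type partial evaluations witnessing $p(a)=0$.

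For $(2)\Rightarrow(1)$, I set $y_0:=0$ and $y_j:=a^jd_{j-1}$ for $1\leq j\leq n$, and verify $y_i\in c_ia^i\hyperplus y_{i+1}$ for every $0\leq i\leq n-1$. For $i=0$, the relation $c_0=-ad_0$ forces $ad_0$ to be the hyperinverse of $c_0$, so $0\in c_0\hyperplus ad_0=c_0\hyperplus y_1$. For $1\leq i\leq n-1$, reversibility converts $c_i\in-ad_i\hyperplus d_{i-1}$ into $d_{i-1}\in c_i\hyperplus ad_i$, and multiplying through by $a^i$ (using distributivity of multiplication over hyperaddition) gives $y_i=a^id_{i-1}\in c_ia^i\hyperplus a^{i+1}d_i=c_ia^i\hyperplus y_{i+1}$. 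The chain closes with $y_n=a^nd_{n-1}=c_na^n$, and associativity of $\hypersum$ then packages these containments into $0=y_0\in\hypersum c_ia^i$.

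For $(1)\Rightarrow(2)$, the case $a=0$ is immediate: by the neutral axiom $\hypersum c_ia^i=\{c_0\}$, forcing $c_0=0$, and the choice $d_i:=c_{i+1}$ satisfies all the required conditions. For $a\neq 0$, associativity of $\hypersum$ lets me extract a chain $y_0=0,y_1,\ldots,y_n=c_na^n$ with $y_i\in c_ia^i\hyperplus y_{i+1}$ for each $0\leq i\leq n-1$. Setting $d_i:=y_{i+1}/a^{i+1}$, the boundary cases give $c_0=-ad_0$ (from $0\in c_0\hyperplus ad_0$ and uniqueness of the hyperinverse) and $d_{n-1}=y_n/a^n=c_n$, while dividing each intermediate containment by $a^i$ yields $d_{i-1}\in c_i\hyperplus ad_i$, which reversibility converts into the desired $c_i\in-ad_i\hyperplus d_{i-1}$.

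The principal obstacle is sign and index bookkeeping: one must invoke reversibility, distributivity, the hyperinverse axiom, and the associativity of $\hypersum$ in the correct order and direction, and consistently track the powers of $a$ linking the chain $(y_i)$ to the coefficients $(d_i)$. Once the correspondence $y_i=a^id_{i-1}$ is fixed, the proof becomes a mechanical translation between the existence of such a chain (which associativity identifies with condition (1)) and the existence of the coefficients $d_i$ (which merely repackage the chain into condition (2)).
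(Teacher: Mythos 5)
Your proof is correct and follows essentially the same route as the paper's: both treat $a=0$ separately, and for $a\neq 0$ both unwind the $n$-fold hypersum into a chain of intermediate elements via associativity, rescale by powers of $a$ to produce the $d_i$, and pass between the two conditions using reversibility and distributivity. The only cosmetic difference is that your chain $y_i=a^id_{i-1}$ accumulates the hypersum from the top coefficient down, whereas the paper's intermediate elements $e_i=-d_ia^{i+1}$ accumulate from $c_0$ upward; the two are related by negation and reversibility.
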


We write $0 \in p(a)$ if \eqref{root1} is satisfied, and $p \in (T-a)q$ if $q=\sum_{i=0}^{n-1} d_iT^i$ satisfies \eqref{root2}.  

We will give a proof of Lemma~\ref{lem: roots} in section~\ref{section: hyperfields}.

\begin{rem}
 Note that, unlike the case where $F=K$ is a field, the ``quotient'' polynomial $q=\sum_{i=0}^{n-1} d_iT^i$ is in general not unique.
 For example, suppose $F=\S$ and let $p(T)=T^3-T^2-T+1$.  Then $p \in (T-1)q$ for $q(T) \in \{ T^2-1,T^2+T-1, T^2-T-1 \}$.
\end{rem}

Lemma~\ref{lem: roots} motivates the following definition:

\begin{df}\label{def: roots}
 Let $c_0,\ldots,c_n \in F$.  An element $a\in F$ is a \emph{root} of the polynomial $p=\sum_{i=0}^n c_iT^i$ if it satisfies either of the equivalent conditions \eqref{root1} or \eqref{root2}.
\end{df}

We define the \emph{multiplicity $\mult_a(p)$ of $a$} as a root of $p$ in terms of a simple recursion as follows. 

\begin{df} \label{df:multiplicity}
If $a$ is not a root of $p$, set $\mult_a(p)=0$. If $a$ is a root of $p$, define
\begin{equation*} \label{eq:multiplicity}
 \mult_a(p) \ = \ 1+\max\big\{ \, \mult_a(q) \, \big| \, p \in (T-a)q\, \big\}.
\end{equation*}
\end{df}

Note that when $F=K$ is a field, $\mult_a(p)$ is just the usual multiplicity of $a$ as a root of $p$.  

\begin{rem}
The idea to define roots of polynomials over hyperfields using \eqref{root1} is due to Viro, cf.\ \cite{Viro11}.
However, we believe that Lemma~\ref{lem: roots} and the definition of $\mult_a(p)$ in Definition~\ref{df:multiplicity} are new to this paper.
\end{rem}

\subsection*{Homomorphisms of hyperfields}

\begin{df}
Let $F_1,F_2$ be hyperfields.  A map $f:F_1 \to F_2$ is called a \emph{hyperfield homomorphism} if $f(0)=0$, $f(1)=1$, $f(ab)=f(a)f(b)$, and $f(a+b)\subset f(a)\hyperplus f(b)$ for all $a,b \in F_1$. 
\end{df}

\begin{ex}
Here are a couple of examples of hyperfield homomorphisms.
\begin{enumerate}
\item The function ${\rm sign} : \R \to \S$ taking a real number to its sign is a homomorphism of hyperfields.  
% More generally (cf.~Remark~\ref{rem:inducedhyperaddition}), if $K$ is a field and $G \leq K^\times$ is a multiplicative subgroup, the natural map $K \to K/G$ sending $a$ to $[a]$ is a homomorphism.
\item If $K$ is a field, a map $v : K\to \R \cup \{ \infty \}$ is called a (Krull) \emph{valuation} if $v^{-1}(\infty) = 0$ and for all $a,b \in K$ we have $v(ab)=v(a)+v(b)$ and $v(a+b) \geq 
\min \{ v(a),v(b) \}$.
One checks easily that a map $v:K\to\R\cup\{\infty\}$ is a valuation if and only if the corresponding map $K \to \T$ is a homomorphism of hyperfields.
\end{enumerate}
\end{ex}

\begin{propA}\label{propA}
 Let $K$ be a field and $f:K\to F$ a homomorphism to a hyperfield $F$.
 Let $p=\sum c_iT^i$ be a polynomial over $K$ and let $\bar{p}=\sum f(c_i)T^i$ the corresponding polynomial over $F$. Then 
 \begin{equation} \label{eq:eqA}
  \mult_b(\bar{p}) \ \geq \ \sum_{a \in f^{-1}(b)} \mult_a(p)
 \end{equation}
 for every $b\in F$.
 Moreover, if $\sum_{b \in F} \mult_b(\bar{p}) \leq {\rm deg}(\bar{p})$ and $p$ splits into a product of linear factors over $K$, then we have equality in \eqref{eq:eqA}.
\end{propA}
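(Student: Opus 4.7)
The plan is to prove the inequality by induction on $n := \deg(p)$, pushing a factorization $p = (T-a)\,p_1$ over $K$ through $f$ to obtain the hyperfield-divisibility relation $\bar p \in (T-f(a))\,\bar{p}_1$, then invoking the recursive Definition~\ref{df:multiplicity}.

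First I would establish a compatibility observation: if $p = (T-a)\,q$ in $K[T]$ with $q = \sum_{i=0}^{n-1} d_i T^i$, then $\bar p \in (T-f(a))\,\bar q$ in $F[T]$. Indeed, the coefficients of $p$ satisfy $c_0 = -a d_0$, $c_i = -a d_i + d_{i-1}$ for $1 \leq i \leq n-1$, and $c_n = d_{n-1}$. Applying $f$, together with the identity $f(-a) = -f(a)$ (which follows from uniqueness of hyperinverses applied to $0 = f(0) \in f(a) \hyperplus f(-a)$) and the defining property $f(x+y) \in f(x) \hyperplus f(y)$, one obtains exactly the witnesses $f(d_0),\dots,f(d_{n-1})$ required by Lemma~\ref{lem: roots}(\ref{root2}) for $\bar p$, $\bar q$, and the element $f(a)$.

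Now fix $b \in F$, set $M_b := \sum_{a \in f^{-1}(b)} \mult_a(p)$, and induct on $n$. The cases $n = 0$ and $M_b = 0$ are trivial. Otherwise, pick $a \in f^{-1}(b)$ with $\mult_a(p) \geq 1$ and factor $p = (T-a)\,p_1$ in $K[T]$. Since $K$ is a field, $\mult_a(p_1) = \mult_a(p) - 1$ and $\mult_{a'}(p_1) = \mult_{a'}(p)$ for all $a' \in K \setminus \{a\}$, so $\sum_{a' \in f^{-1}(b)} \mult_{a'}(p_1) = M_b - 1$. By the inductive hypothesis, $\mult_b(\bar{p}_1) \geq M_b - 1$, and the compatibility observation gives $\bar p \in (T-b)\,\bar{p}_1$. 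Definition~\ref{df:multiplicity} therefore yields
\[
 \mult_b(\bar p) \;\geq\; 1 + \mult_b(\bar{p}_1) \;\geq\; M_b,
\]
which is exactly \eqref{eq:eqA}.

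For the equality statement, suppose $p$ splits as $c\prod_{i=1}^n(T-a_i)$ in $K[T]$ and that $\sum_{b \in F} \mult_b(\bar p) \leq \deg(\bar p)$. Then $\sum_{a \in K} \mult_a(p) = n = \deg(p) \geq \deg(\bar p)$. Summing \eqref{eq:eqA} over $b \in F$, and using that the fibers $f^{-1}(b)$ partition $K$, one gets
\[
 \deg(\bar p) \;\geq\; \sum_{b \in F} \mult_b(\bar p) \;\geq\; \sum_{b \in F} \sum_{a \in f^{-1}(b)} \mult_a(p) \;=\; \sum_{a \in K} \mult_a(p) \;=\; n \;\geq\; \deg(\bar p),
\]
so every inequality in the chain is an equality. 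The sum $\sum_{b \in F}\bigl(\mult_b(\bar p) - \sum_{a \in f^{-1}(b)} \mult_a(p)\bigr)$ of non-negative terms thus vanishes, forcing equality in \eqref{eq:eqA} for each individual $b \in F$.

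The main obstacle is the compatibility observation: one has to verify carefully that the set-valued relation $f(x+y) \in f(x) \hyperplus f(y)$, combined with $f(-a) = -f(a)$, furnishes precisely the list of hypersum inclusions demanded by Lemma~\ref{lem: roots}(\ref{root2}). Once that is pinned down, the rest is a formal induction on degree using ordinary root multiplicities in $K[T]$, and the equality claim follows by a pigeonhole-style squeeze between $n$ and $\deg(\bar p)$.
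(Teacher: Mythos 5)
Your proposal is correct and follows essentially the same route as the paper: the paper builds the chain $q_1,\dots,q_{n+1}$ of successive quotients by the factors $(T-a_i)$ with $f(a_i)=b$ and pushes each relation through $f$ to certify $\mult_b(\bar p)\geq n$, which is exactly your degree induction unrolled, and your equality argument via the squeeze $\deg \bar p \geq \sum_b \mult_b(\bar p) \geq \sum_a \mult_a(p) = \deg p = \deg \bar p$ is the one in the paper. Your explicit verification of the compatibility step (using $f(-a)=-f(a)$ and Lemma~\ref{lem: roots}\eqref{root2}) is a welcome elaboration of what the paper dispatches with ``by the definition of a homomorphism of hyperfields.''
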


We will give a proof of Proposition~\ref{propA} in section~\ref{section: hyperfields}.

\begin{rem}[A pathological example]
 If $F$ is a hyperfield and $p$ is a polynomial of degree $d$ over $F$, it is possible for the sum $\sum_{a \in F} \mult_a(p)$ to exceed $d$.
 For example, if $F=\W$ is the weak sign hyperfield, then both $1$ and $-1$ are double roots of the quadratic polynomial $p(T)=T^2 + T + 1$.  
 (Indeed, it is immediately verified that $0 \in q(1)$ for $q \in \{ p, T-1 \}$, $0 \in q(-1)$ for $q \in \{ p, T+1 \}$, $p \in (T+1)(T+1)$, and $p \in (T-1)(T-1)$.)

 Such ``pathological'' behavior does not happen when $F$ is a field or when $F=\K, \; \S,$ or $\T$; in these cases, $\sum_{a \in F} \mult_a(p) \leq d$ for every polynomial $p$ over $F$ by Remarks \ref{rem:Krasner}, \ref{rem:degD}, and \ref{rem:degN} below.
\end{rem}

\begin{rem}[An even more pathological example] \label{rem:infiniteroots}
 A nonzero polynomial $p$ over a hyperfield $F$ can have {\em infinitely many} roots, in which case $\sum_{a \in F} \mult_a(p) = \infty$. For example, take $F=\P$ to be the phase hyperfield and let $p(T)=T^2 + T + 1$.  Then $a=e^{i \theta}$ is a root of $p$ for all $\pi/2 < \theta < 3\pi/2$.
%
% Such ``pathological'' behavior does not happen when $F$ is a field or when $F=\K, \; \S,$ or $\T$; in these cases, $\sum_{a \in F} \mult_a(p) \leq d$ for every polynomial $p$ over $F$ by Remarks \ref{rem:Krasner}, \ref{rem:degD}, and \ref{rem:degN} below.
\end{rem}

\begin{rem} \label{rem:Krasner}
 If $p(T)=c_r T^r + c_{r+1}T^{r+1} + \cdots + c_n T^n$ is a polyomial over the Krasner hyperfield $\K$, where we assume that $c_r, c_n \neq 0$, then one checks easily that $\mult_0(p)=r$ and $\mult_1(p)=n-r$.
\end{rem}

\begin{rem}
The inequality provided by Proposition \ref{propA} does not hold in general if we replace $K$ by an arbitrary hyperfield. For example, consider the map $f:\P\to\K$ from the phase hyperfield to the Krasner hyperfield that sends all nonzero elements of $\P$ to $1$. By Remark \ref{rem:Krasner}, $1$ is a root of multiplicity $2$ of the polynomial $T^2+T+1$ over $\K$. But when considered as a polynomial over $\P$, it has infinitely many roots by Remark \ref{rem:infiniteroots}, so the sum of the multiplicities of all roots of $T^2+T+1$ in $\P$ is not bound from above by $2$. 
 
 It follows from the results of this text, however, that Proposition \ref{propA} is true for $\T$ or $\S$ in place of $K$. In the case of the tropical hyperfield $\T$, this follows from the unique factorization of polynomials over $\T$ into linear terms (Theorem \ref{thm: fundamental theorem for the tropical hyperfield}), which allows for the same arguments as in the case of a field to prove the inequality of Proposition \ref{propA}. In the case of the sign hyperfield $\S$, the inequality of Proposition \ref{propA} can be established along the following lines. There is only one proper surjection from $\S$ to another hyperfield, namely the map $f:\S\to\K$ that sends $\pm 1$ to $1$. Let $p=\pm T^n+\dotsc\pm T^m$ be a polynomial over $\S$ and $\bar p$ its image polynomial over $\K$. The inequality for $b=0$ is clear. Since $\mult_{-1}(p)=\mult_1(p(-T))$, we can deduce the inequality $\mult_1(p)+\mult_{-1}(p)\leq n-m$ from Descartes's rule of signs (Theorem \ref{thmD}). Since $n-m=\mult_1(\bar p)$ (cf.\ Remark \ref{rem:Krasner}), we obtain the desired inequality.
\end{rem}

\subsection*{Multiplicities over the sign hyperfield and Descartes' rule of signs}

Let $p(T)=\sum c_iT^i$ be a polynomial over the sign hyperfield $\S$, so that all coefficients are $0$, $1$ or $-1$.  We define the \emph{number of sign changes in the coefficients of $p$} as
\[
 \sigma(p) \ = \ \# \ \big\{\, i \, \big| \, c_i=-c_{i+k}\neq 0\text{ and }c_{i+1}=\dotsb=c_{i+k-1}=0\text{ for some }k\geq1 \, \big\}.
\]

The following result will be proved in section~\ref{section: multiplicities over the sign hyperfield}.

\begin{thmA}\label{thmD}
 Let $p$ be a polynomial over $\S$. Then $\mult_1(p)=\sigma(p)$.
\end{thmA}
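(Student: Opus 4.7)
I plan to prove $\mult_1(p) = \sigma(p)$ by induction on $n = \deg p$. The base $n = 0$ is immediate. For the inductive step, if $\sigma(p) = 0$, all nonzero coefficients of $p$ share a common sign $\epsilon \in \{\pm 1\}$, so $c_0 \hyperplus \cdots \hyperplus c_n = \{\epsilon\} \not\ni 0$; hence $1$ is not a root and $\mult_1(p) = 0 = \sigma(p)$. Otherwise $s := \sigma(p) \geq 1$, and combining Definition~\ref{df:multiplicity} with the inductive hypothesis applied to polynomials of degree $n-1$ reduces the theorem to showing
\[
 \max\{\,\sigma(q) \;:\; p \in (T-1)q\,\} \;=\; s - 1.
\]

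For the upper bound, I would lift every hyperfield factorization to an honest real factorization. Given $q = \sum d_i T^i$ with $p \in (T-1)q$, I choose $D_i = r_i d_i$ with positive scalars $r_i$ (and $D_i = 0$ when $d_i = 0$) and set $C_i := D_{i-1} - D_i$. A case analysis of the relation $c_i \in -d_i \hyperplus d_{i-1}$ shows that, along each maximal same-sign run of the $d_i$'s, the $r_i$'s can be chosen inductively so that $\sign(C_i) = c_i$, while at transitions between runs the sign is forced correctly. This produces $P = (T-1)Q$ in $\R[T]$ with $\sign(P) = p$ and $\sign(Q) = q$, so classical Descartes' lemma --- that $\sigma(P) \geq \sigma(Q) + 1$ whenever $P = (T-a)Q$ over $\R$ with $a > 0$ --- gives $\sigma(q) \leq s - 1$.

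For the lower bound, I would build $q$ top-down from $p$: starting with $d_{n-1} = c_n$, the relation $c_i \in -d_i \hyperplus d_{i-1}$ uniquely determines $d_{i-1}$ from $(c_i, d_i)$ except when $c_i = -d_i \neq 0$, in which case $d_{i-1}$ is free in $\S$. By judiciously choosing ``flip'' ($d_{i-1} = c_i$), ``keep'' ($d_{i-1} = d_i$), or ``zero'' ($d_{i-1} = 0$) at these free positions, one can arrange the boundary condition $d_0 = -c_0$ while ensuring the resulting sequence has exactly $s-1$ sign changes. The main obstacle is the required bookkeeping, which splits into several cases according to whether $c_0 = 0$ and how the sign changes of $p$ are distributed; in particular the subcase $c_0 = 0$ demands routing the running sign $\tau := d_i$ through $0$ at an appropriate free position without introducing unnecessary sign changes in $q$, and a parity argument ties the number of flips available to the boundary requirement.
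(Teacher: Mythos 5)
Your overall architecture matches the paper's: reduce the theorem to the identity $\max\{\sigma(q) : p \in (T-1)q\} = \sigma(p)-1$ and prove the two inequalities separately. For the upper bound you take a genuinely different route. The paper stays inside $\S$ and runs a descending induction on partial sign-change counts $\sigma_i(p),\sigma_i(q)$ with a four-case analysis; you instead lift $p\in(T-1)q$ to an honest factorization $P=(T-1)Q$ in $\R[T]$ with $\sign(P)=p$, $\sign(Q)=q$, and invoke the classical lemma that multiplying by $(T-a)$ with $a>0$ increases the number of sign changes. Your lifting does work: the only indices where the signs of $C_i=D_{i-1}-D_i$ are not forced are those with $d_{i-1}=d_i\neq 0$, where the constraint $c_i\in(-d_i)\hyperplus d_{i-1}$ is vacuous and a greedy choice of $r_{i-1}$ relative to $r_i$ realizes any prescribed $c_i$. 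The trade-off is that you import the combinatorial engine of the classical proof of Descartes' rule as a black box, whereas the paper's self-contained argument in $\S$ is what makes its subsequent derivation of Descartes' rule non-circular in spirit; if you use this route you should at least prove the classical lemma (its proof is essentially the same bookkeeping the paper does directly over $\S$).

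The genuine gap is the lower bound. You describe a top-down propagation ($d_{n-1}=c_n$, with $d_{i-1}$ forced except when $c_i=-d_i\neq 0$) and then say that ``judicious'' choices at the free positions, together with unspecified case splits on $c_0$ and a parity argument, will produce some $q$ with $d_0=-c_0$ and $\sigma(q)=\sigma(p)-1$. This is exactly the step that needs to be carried out, and you have not done it: you must exhibit a concrete rule for the free choices and verify both that the boundary condition $d_0=-c_0$ is met and that no extra sign changes are created. The paper avoids all of the routing and parity difficulties by building $q_0$ bottom-up with an explicit closed-form recipe: letting $k$ be the length of the initial constant-sign run $c_0=\dots=c_k=-c_{k+1}$, it sets $d_i=-c_0$ for $i\leq k$, and for $i>k$ sets $d_i=c_{i+1}$ if $c_{i+1}\neq 0$ and $d_i=d_{i+1}$ if $c_{i+1}=0$; one then checks directly that $p\in(T-1)q_0$ and $\sigma(q_0)=\sigma(p)-1$. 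I recommend you either adopt such an explicit construction or fully execute the case analysis you allude to; as written, the existence half of the identity is asserted rather than proved.
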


\begin{rem} \label{rem:degD}
 We leave it as an easy exercise for the reader to verify, using Theorem \ref{thmD} and the fact that $-1$ is a root of $p(T)$ if and only if $1$ is a root of $p(-T)$, that if $p$ is a polynomial over $\S$ then $\sum_{a \in \S} \mult_a(p) \leq {\rm deg}(p)$.
\end{rem}

As a consequence of Theorem \ref{thmD} and Proposition \ref{propA}, we obtain a new proof of Descartes' rule of signs.

\begin{thm*}[Descartes' rule of signs]
 Let $p=\sum c_iT^i$ be a polynomial over $\R$ and let $\overline{p}=\sum \sign(c_i)T^i$.
 Then the number of positive real roots of $p$ (counting multiplicities) is at most $\sigma\big(\overline{p}\big)$, with equality if $p$ splits into a product of linear factors over $\R$.  %Moreover, the difference between the number of positive real roots of $p$ and $\sigma\big(\overline{p}\big)$ is always even.
 \end{thm*}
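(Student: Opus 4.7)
The plan is to apply Proposition~\ref{propA} with $K = \R$, $F = \S$, and $f = \sign : \R \to \S$, taking $b = 1 \in \S$. Since $\sign^{-1}(1) = \R_{>0}$, the inequality in \eqref{eq:eqA} reads
\[
  \mult_1(\overline p) \ \geq \ \sum_{a \in \R_{>0}} \mult_a(p),
\]
and the right-hand side is exactly the number of positive real roots of $p$ counted with multiplicity (using that multiplicities over the field $\R$ reduce to the classical notion, as noted after Definition~\ref{df:multiplicity}). By Theorem~\ref{thmD}, $\mult_1(\overline p) = \sigma(\overline p)$, which establishes the desired upper bound.

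For the equality statement when $p$ splits into linear factors over $\R$, I would invoke the second half of Proposition~\ref{propA}. Its hypothesis requires $\sum_{b \in \S} \mult_b(\overline p) \leq \deg(\overline p)$, which is precisely the content of Remark~\ref{rem:degD}. Hence equality holds in \eqref{eq:eqA} for every $b \in \S$; in particular for $b = 1$ this gives
\[
  \#\{\text{positive real roots of } p \text{ with multiplicity}\} \ = \ \mult_1(\overline p) \ = \ \sigma(\overline p),
\]
again using Theorem~\ref{thmD} in the last step.

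There is no real obstacle here: once Proposition~\ref{propA} and Theorem~\ref{thmD} are in hand, the classical theorem follows by specializing to the sign homomorphism and choosing $b = 1$. All the conceptual content has been packaged into those two statements, and the only thing to verify is that the classical counting of positive roots agrees with $\sum_{a \in \sign^{-1}(1)} \mult_a(p)$, which is immediate.
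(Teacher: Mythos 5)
Your proposal is correct and follows exactly the paper's own argument: apply Proposition~\ref{propA} to the sign homomorphism with $b=1$, identify $\mult_1(\overline p)$ with $\sigma(\overline p)$ via Theorem~\ref{thmD}, and obtain the equality case from the second half of Proposition~\ref{propA} together with Remark~\ref{rem:degD}. No gaps; the only cosmetic difference is that the paper also normalizes $p$ to be monic, which is not actually needed.
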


\begin{proof}
 Since neither $\sum_{\substack{a>0}} \mult_a(p)$ nor $\sigma\big(\overline{p}\big)$ changes if we multiply $f$ by a nonzero real number, we can assume that $f$ is monic. By Theorem \ref{thmD}, $\sigma\big(\overline{p}\big)=\mult_1\big(\overline{p}\big)$. Since $\sign(a)=1$ if and only if $a>0$, Proposition \ref{propA} implies
 \[
  \sum_{\substack{a>0}} \mult_a(p) \ \leq \ \mult_1\big(\overline{p}\big) \ = \ \sigma\big(\overline{p}\big),
 \]
 which establishes the first part of the theorem.  The assertion regarding equality when $p$ splits into a product of linear factors over $\R$ follows from Proposition \ref{propA} and Remark~\ref{rem:degD}.  %For the last part of the theorem, see Remark~\ref{rem:parity}.
  \end{proof}
 
\begin{rem} 
 For any polynomial $\bar{p} \in \S[T]$ there exists a polynomial $p \in \R[T]$ with ${\rm sign}(p)=\bar{p}$ such that the number of positive (resp. negative) real roots of $p$ (counting multiplicities) is equal to $\mult_1(\bar{p})$ (resp. $\mult_{-1}(\bar{p})$), cf.\ \cite{Grabiner99}. So the bound given by our Proposition \ref{propA} when the homomorphism in question is ${\rm sign} : \R \to \S$ is tight in a particularly strong sense.  

 It would be interesting to characterize the hyperfield homomorphisms $f: K \to F$ with the property that for any $\bar{p}$ over $F$, there exists a polynomial $p \in K[T]$ with $f(p)=\bar{p}$ such that $\mult_b(\bar{p}) \ = \ \sum_{a \in f^{-1}(b)} \mult_a(p)$ for every $b\in F$.
\end{rem}

\subsubsection*{Short historical account}

Descartes stated his rule without proof in the appendix La g\'eom\'etrie (\cite{Descartes}) to his book {D}iscours de la m\'ethode, which was published in 1637. Newton restated this formula in 1707, also without a proof. The first proof appears in 1740 in text Usages de l'analyse de Descartes (\cite{Gua-de-Malves}) by de Gua de Malves. It was reproven by Gau\ss\ (\cite{Gauss}) in 1728, including the addition that the difference between the number of positive real roots and number of sign changes is always even, which is often mentioned as a part of Descartes' rule.

%Gau\ss' addition on the parity of the number of real roots depends on topological properties of the real numbers and as such does not fit neatly into our theory. For completeness, we sketch an argument nevertheless; cf.\ \cite{Wang04} for details. Since $p$ has finitely many zeros, there are open intervals $(0,\epsilon)$ and $(\delta,\infty)$ on which $p$ does not vanish. Since $p$ is monic, $p$ is positive on $(\delta,\infty)$. If $c_m$ is the smallest nonzero coefficient of $p$, then $p$ is positive on $(0,\epsilon)$ if $c_m>0$. Thus the number of zeros between $\frac 12\epsilon$ and $2\delta$, counted with multiplicities, is even and equal to the number of positive roots of $p$. If $c_m>0$, the number of sign changes of $p$ is also even, and thus the difference between the two numbers is even. If $c_m<0$, the same argument shows that both numbers are odd, and thus the difference is also even in this case.

\subsection*{Tropical multiplicities and Newton's polygon rule} 

% We begin our discussion with the promised definition of Newton polygons.

\begin{df} \label{def:NP}
Given a polynomial $p = \sum_{i=0}^n c_i T^i$ of degree $n$ with $c_i \in \T$, its {\em Newton polygon} ${\rm NP}(p)$ is defined to be the lower convex hull of $\{ (i, c_i) \; : \; 0 \leq i \leq n \} \subset \R^2$.
(For simplicity, we assume that $c_0 \neq \infty$; this allows us to avoid having to consider vertical segments in the Newton polygon.)
\end{df}

More vividly,  imagine the points $(i,c_i)$ as nails sticking out from the plane and attach a long piece of string with one end nailed to $(x_0,y_0)=(0,c_0)$ and the other end free.  Rotate the string counter-clockwise until it meets one of the nails; this will be the next vertex $(x_1,y_1)$ of the Newton polygon.   As we continue rotating, the segment $L_1$ of string between $(x_0,y_0)$ and $(x_1,y_1)$ will be fixed.  Continuing to rotate the string in this manner until the string catches on the point $(x_t,y_t) = (n,c_n)$ yields the Newton polygon of $p$.

Thus ${\rm NP}(f)$ is a finite union $L_1,\ldots,L_t$ of line segments, each with a different slope.  We let $s_j$ be the negative of the slope of $L_j$ and we denote by $\lambda_j$ the length of the projection of $L_j$ to the $x$-axis.

Finally, for $s \in \R$ we define $\nu_s(p)$ to be 0 if $s \neq s_j$ for all $j=1,\ldots,t$, and otherwise we set $\nu_s(p) = \lambda_j$, where $L_j$ is the unique segment of ${\rm NP}(f)$ with $s_j = s$.

\begin{ex}
 We illustrate these definitions in the following example. 
 Let $p=\sum c_iT^i$ be the monic polynomial of degree $5$ with $c_0=2$, $c_1=0$, $c_2=1$, $c_3=\infty$, $c_4=-1$ and $c_5=0$. 
 Then the Newton polygon can be illustrated as follows:
 \[
  \beginpgfgraphicnamed{tikz/fig1}
  \begin{tikzpicture}[inner sep=0,x=40pt,y=40pt,font=\footnotesize]
   \filldraw (0,0) circle (1pt);
   \filldraw (1,0) circle (1pt);
   \filldraw (2,0) circle (1pt);
   \filldraw (3,0) circle (1pt);
   \filldraw (4,0) circle (1pt);
   \filldraw (5,0) circle (1pt);
   \filldraw (0,-1) circle (1pt);
   \filldraw (0,2) circle (1pt);
   \filldraw (0,1) circle (1pt);
   \filldraw (0,2) circle (1pt);
   \node at (-0.15,0) {$0$};
   \node at (-0.15,1) {$1$};
   \node at (-0.15,2) {$2$};
   \node at (-0.25,-1) {$-1$};
   \node at (1.0,-0.2) {$1$};
   \node at (2.0,-0.2) {$2$};
   \node at (3.0,-0.2) {$3$};
   \node at (4.0,-0.2) {$4$};
   \node at (5.0,-0.2) {$5$};
   \filldraw (0,2) circle (2pt);
   \filldraw (1,0) circle (2pt);
   \filldraw (2,1) circle (2pt);
   \filldraw (4,-1) circle (2pt);
   \filldraw (5,0) circle (2pt);
   \node at (0.4,2) {$(0,c_0)$};
   \node at (1.3,0.2) {$(1,c_1)$};
   \node at (2.4,1) {$(2,c_2)$};
   \node at (4.45,-1) {$(4,c_4)$};
   \node at (5.0,0.2) {$(5,c_5)$};
   \draw[-, thick] (0,2) -- (1,0) -- (4,-1) -- (5,0);
   \node at (0.4,0.8) {$L_1$};
   \node at (2.4,-0.7) {$L_2$};
   \node at (4.75,-0.6) {$L_3$};
   \draw[->] (0,0) -- (6,0) node[below=5pt] {$i$};
   \draw[->] (0,-1.2) -- (0,2.3) node[left=10pt] {$c_i$};
   \node[font=\normalsize] at (6.0,1.5) {
   \begin{tabular}{|c||c|c|c|}
    \hline
    $k$   & $1$ & $2$ & $3$ \\
    \hline \hline
    $s_k$ & $2$ & $1/3$ & $-1$ \\
    \hline
    $\lambda_k$ & $1$ & $3$ & $1$ \\
    \hline
   \end{tabular}
   };
  \end{tikzpicture}
  \endpgfgraphicnamed
 \]
 We display the values of the $s_k$ and the $\lambda_k$ for the line segments $L_1$, $L_2$ and $L_3$ in the table next to the graphic. Thus the function $\nu_s(p)$ has the values $\nu_{2}(p)=1, \nu_{1/3}(p)=3, \nu_{-1}(p)=1$, and $\nu_s(p)=0$ for all $s \not\in \{ 2, 1/3, -1 \}$.
\end{ex}

The following result will be proved in section~\ref{section: multiplicities of tropical roots}.

\begin{thmA} \label{thmN}
Let $p$ be a polynomial over $\T$. For every $s \in \T$, we have $\mult_s(p) = \nu_s(p)$.
\end{thmA}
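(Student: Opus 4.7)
The plan is to prove the theorem by induction on $n = \deg p$, reducing to the following factorization lemma: if $\nu_s(p) \geq 1$, then there exists $q \in \T[T]$ of degree $n - 1$ with $p \in (T-s) q$, and for any such $q$ one has $\nu_s(q) = \nu_s(p) - 1$ and $\nu_{s'}(q) = \nu_{s'}(p)$ for all $s' \neq s$. Granting the lemma, the theorem follows: if $\nu_s(p) = 0$, then $\min_i(c_i + is)$ is attained at a unique index, so $0 \in \hypersum c_i s^i$ fails, hence $s$ is not a root and $\mult_s(p) = 0 = \nu_s(p)$; if $\nu_s(p) \geq 1$, pick any $q$ with $p \in (T-s) q$ and apply the induction hypothesis to get $\mult_s(p) = 1 + \max_q \mult_s(q) = 1 + \nu_s(q) = \nu_s(p)$.

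The key technical step is to establish, for any $p \in (T-s) q$, the identity of tropical polynomial functions
\[
P(x) \;=\; \min(x, s) + Q(x) \qquad \text{for all } x \in \R,
\]
where $P(x) := \min_i(c_i + ix)$ and $Q(x) := \min_j(d_j + jx)$. The inequality $P \geq \min(x, s) + Q$ is straightforward: the hyperfield relation $c_k \in d_{k-1} \hyperplus (s + d_k)$ forces $c_k \geq \hat c_k := \min(d_{k-1}, s + d_k)$ (with the boundary conventions $\hat c_0 := s + d_0$, $\hat c_n := d_{n-1}$), hence $\min_k(c_k + kx) \geq \min_k(\hat c_k + kx) = \min(x, s) + Q(x)$, the last equality being the standard formula for tropical multiplication. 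For the reverse, the $\hat c_k$ are exactly the coefficients of the classical tropical product $(T-s) \cdot q$, whose Newton polygon is the Minkowski sum ${\rm NP}(T-s) + {\rm NP}(q)$. I would argue that the only way $c_k$ can strictly exceed $\hat c_k$ is in the tie case $d_{k-1} = s + d_k$, which corresponds to $(k-1, d_{k-1})$ and $(k, d_k)$ lying on a slope-$(-s)$ segment of ${\rm NP}(q)$; after the Minkowski shift, such indices $k$ lie in the interior of the slope-$(-s)$ segment of ${\rm NP}(p)$ and hence are not vertices. Therefore $c_k = \hat c_k$ at every vertex of ${\rm NP}(p)$, which forces ${\rm NP}(c) = {\rm NP}(\hat c)$ and equality of the two tropical polynomial functions.

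Once the identity is in hand, Legendre duality gives ${\rm NP}(p) = {\rm NP}(q) + {\rm NP}(T-s)$ as Minkowski sums. Since ${\rm NP}(T-s)$ consists of a single slope-$(-s)$ segment of length $1$, the slope-$(-s)$ segment of ${\rm NP}(p)$ is one unit longer than that of ${\rm NP}(q)$ and all other slopes agree, which yields the multiplicity transformation in the factorization lemma. For existence, I set $Q(x) := P(x) - \min(x, s)$; the assumption $\nu_s(p) \geq 1$ means the slope drop of $P$ at $x = s$ is at least $1$, exactly what is needed to absorb the $+1$ slope drop of $\min(x, s)$ and keep $Q$ concave with integer slopes in $\{0, 1, \ldots, n-1\}$, so $Q$ is the tropical polynomial function of some $q$ of degree $n - 1$. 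Choosing the coefficients $d_j$ via a forward/backward recursion on the $c_i$ (raising them above the Newton polygon values of $Q$ where necessary to create the tie $d_{k-1} = s + d_k$ at any index $k$ where $c_k > {\rm NP}(p)(k)$) produces a valid $q$ with $p \in (T-s) q$.

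The main obstacle is the reverse inequality in the key identity: one must carefully align the tie versus non-tie classification of each index with the vertex/non-vertex structure of ${\rm NP}(p)$, using the Minkowski-sum description of ${\rm NP}(p)$ in terms of ${\rm NP}(q)$ and ${\rm NP}(T-s)$ to transfer this combinatorial information.
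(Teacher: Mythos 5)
Your route is genuinely different from the paper's and is essentially sound, but one step is justified incorrectly as written. The paper reduces everything to complete factorizations into linear factors: it shows that $p\in\prod(T+a_i)$ holds if and only if the associated tropical functions satisfy $\overline p=\sum\min\{T,\overline a_i\}$, invokes the classical uniqueness of tropical factorization (the ``fundamental theorem of tropical algebra''), and then proves a division lemma (if $p\in\prod_{i=1}^n(T+a_i)$ and $p\in(T+a_n)q$, then $q\in\prod_{i=1}^{n-1}(T+a_i)$) by applying that uniqueness to both $p$ and $q$; the induction is on the multiplicity. You instead analyze a single division step $p\in(T-s)q$ directly, prove the functional identity $\overline p=\min\{T,\overline s\}+\overline q$ for \emph{every} such $q$, and induct on the degree. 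This avoids the fundamental theorem of tropical algebra and the uniqueness of factorization altogether, at the cost of having to control arbitrary hyperfield quotients $q$ by hand; your factorization lemma plays the combined role of the paper's Proposition~\ref{prop: characterization of functional equivalence over the tropical hyperfield} and Lemma~\ref{lemma: polynomial division by linear factors for the tropical hyperfield}.

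The flawed step: you assert that the tie $d_{k-1}=s+d_k$ ``corresponds to $(k-1,d_{k-1})$ and $(k,d_k)$ lying on a slope-$(-s)$ segment of ${\rm NP}(q)$.'' That is false: the tie is a condition on the raw coefficients, and both points may lie strictly above ${\rm NP}(q)$ (take $d_{k-1},d_k$ both very large with difference exactly $s$). Fortunately the conclusion you need --- that $\min_j(\hat c_j+jx)$ is never attained \emph{only} at tie indices --- is true and follows from the coefficients directly: at a tie index $k$ one has $\hat c_{k+1}\leq d_k=\hat c_k-s$ and $\hat c_{k-1}\leq s+d_{k-1}=\hat c_k+s$, so for $x\leq s$ the minimum propagates from $k$ to $k+1$ and for $x\geq s$ from $k$ to $k-1$; since the indices $0$ and $n$ satisfy exact equalities, the minimum is always also attained at some index with $c_j=\hat c_j$, which gives the reverse inequality $P\leq\min(\cdot,s)+Q$. (Note also that it is at the vertices of ${\rm NP}(\hat c)$, not of ${\rm NP}(p)$, that you must verify $c_k=\hat c_k$, since ${\rm NP}(p)={\rm NP}(\hat c)$ is precisely what you are proving.) Finally, your existence argument via an explicit coefficient recursion is vaguer than necessary: if $\nu_s(p)\geq 1$ then $\min_i(\overline c_i+i\cdot\overline s)$ is attained at least twice, so $s$ is a root of $p$, and Lemma~\ref{lem: roots} already produces a $q$ with $p\in(T-s)q$; no construction is needed. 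With these two repairs your argument is complete.
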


\begin{rem} \label{rem:degN}
It follows immediately from Theorem \ref{thmN} that $\sum_{a \in \T} \mult_a(p) = {\rm deg}(p)$ for every polynomial $p$ over $\T$.
\end{rem}

Using Theorem~\ref{thmN} and Proposition \ref{propA}, we deduce:

\begin{thm*}[Newton's polygon rule]
Let $K$ be a field and let $v : K \to \T$ be a valuation.  Let $p=\sum c_iT^i$ be a polynomial over $K$ and let $\overline{p}=\sum v(c_i)T^i$. Let $s \in \T$.  Then the number of roots $a \in K$ of $p$ with $v(a)=s$ (counting multiplicities) is at most $\nu_s(\overline{p})$, with equality if $p$ splits into a product of linear factors over $K$.
\end{thm*}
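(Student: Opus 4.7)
The plan is to derive Newton's polygon rule as a direct corollary of Theorem \ref{thmN} combined with Proposition \ref{propA}, applied to the hyperfield homomorphism $v \colon K \to \T$. Recall from the examples following the definition of hyperfield homomorphism that a Krull valuation on a field $K$ is precisely the same data as a hyperfield homomorphism $K \to \T$, so the hypotheses of Proposition \ref{propA} are available. Also, since $K$ is a field, the quantity $\mult_a(p)$ reduces to the classical multiplicity of $a$ as a root of $p$; hence $\sum_{a \in v^{-1}(s)} \mult_a(p)$ is exactly the count, with classical multiplicity, of roots $a \in K$ of $p$ with $v(a)=s$ (noting that $v^{-1}(\infty)=\{0\}$ in case $s=\infty$, and $v^{-1}(s) \subset K^\times$ otherwise).

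First I would invoke Theorem \ref{thmN} to identify
\[
 \mult_s(\overline{p}) \ = \ \nu_s(\overline{p})
\]
for each $s \in \T$. Then, applying Proposition \ref{propA} with $f = v$ and $b = s$, I obtain
\[
 \sum_{a \in v^{-1}(s)} \mult_a(p) \ \leq \ \mult_s(\overline{p}) \ = \ \nu_s(\overline{p}),
\]
which is the desired upper bound.

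For the equality statement under the assumption that $p$ splits into a product of linear factors over $K$, I would verify the hypothesis ``$\sum_{b \in \T}\mult_b(\overline{p}) \leq \deg(\overline{p})$'' of Proposition \ref{propA}: this is precisely Remark \ref{rem:degN}, which asserts that the sum equals $\deg(\overline{p})$. Proposition \ref{propA} then upgrades the inequality above to an equality for every $s\in\T$.

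There is no real obstacle here; the theorem is essentially a packaging of the previously established results, and the only thing one has to be mildly careful about is the bookkeeping of the case $s=\infty$ and the translation between the hyperfield-theoretic notion of multiplicity and the classical one on the field side.
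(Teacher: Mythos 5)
Your proposal is correct and follows essentially the same route as the paper: both deduce the inequality from Proposition \ref{propA} together with Theorem \ref{thmN}, and both obtain the equality case from Remark \ref{rem:degN} (you via the ``moreover'' clause of Proposition \ref{propA}, the paper by writing out the same degree-counting chain that proves that clause). No gaps.
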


\begin{proof}
 Newton's polygon rule can be proven with the same argument as Descartes' rule of signs, where we rely on Theorem \ref{thmN} instead of Theorem \ref{thmD} in this case. Namely, by Proposition \ref{propA} and Theorem \ref{thmN}, we have
 \[
  \sum_{a\in v^{-1}(s)} \mult_a(p) \ \leq \ \mult_s(\overline p) \ = \ v_s(\overline p).
 \]
 Thus the first claim of the theorem. If $p$ splits into linear factors, then we deduce from this inequality and Remark \ref{rem:degN} that
 \[
  \deg(p) \ = \ \sum_{a\in K}\, \mult_a(p) \ \leq \ \mult_s(\overline p) \ = \ \deg(\overline p) \ = \ \deg(p),
 \]
 and thus equality throughout, which establishes the second claim of the theorem.
\end{proof}

\begin{rem}
 If $K$ is complete with respect to the valuation $v$ (i.e., $K$ is complete as a metric space with respect to the distance function $d(a,b) = e^{-v(a-b)}$), then $v$ extends uniquely to a valuation on any fixed algebraic closure $\bar{K}$ of $K$; cf.\ \cite[Chapter II, Thm.\ 4.8]{Neukirch99}. So in this case, Newton's polygon rule can be formulated as follows: the number of roots $a \in \bar{K}$ of $p$ with $v(a)=s$ (counting multiplicities) is equal to $\nu_s(\overline{p})$.
\end{rem}

\begin{rem}
 When $K$ is complete, one often uses Hensel's Lemma \cite[Chapter II, Lemma 4.6]{Neukirch99} in conjunction with Newton's polygon rule to guarantee the existence of precisely $\nu_s(\overline{p})$ roots in $K$ with valuation $s$.  For example, if $p$ has coefficients in the valuation ring $R$ of $K$ and the reduction of $p$ modulo the maximal ideal of $R$ splits completely into distinct linear factors, then it follows from Hensel's Lemma that $p$ splits completely into linear factors over $K$.
\end{rem}

\begin{rem}It would be interesting to find other useful applications of Proposition \ref{propA} besides Descartes' rule and Newton's polygon rule.\footnote{Note added: the first author's student Trevor Gunn has recently found a simultaneous generalization of Descartes' rule and Newton's polygon rule by applying Proposition \ref{propA} to the {\em signed tropical hyperfield}; see \cite{Gunn19} for details.} It would also be interesting to formulate a higher-dimensional version of the theory of multiplicities developed in this paper.
\end{rem}

\subsection*{Relation to the supertropical numbers and the symmetrization of the tropical numbers}

The tropical hyperfield is closely related to the supertropical numbers, which were introduced by Izhakian in \cite{Izhakian09}. To explain, we can extend the product and the hyperaddition on the tropical hyperfield $\T$ to the whole powerset $\cP(\T)$ of $\T$ by elementwise evaluation. With these operations, $\cP(\T)$ becomes a semiring. The smallest subsemiring of $\cP(\T)$ that contains all singletons $\{a\}$ with $a\in\T$ consists of all singletons $\{a\}$ together with all intervals of the form $[a,\infty]$, where $a$ varies through $\T$. This subsemiring is isomorphic to Izhakian's semiring of supertropical numbers. In joint work with Rowen, Izhakian extends his theory in \cite{Izhakian-Rowen10a} and \cite{Izhakian-Rowen10b} to polynomials over the semiring of supertropical numbers. The common intersections with the content of this paper are: (a) they introduce the concept of a root, which agrees with ours under the correspondence described above; and (b) they show in \cite[Lemma 5.7]{Izhakian-Rowen10a} that every supertropical polynomial has a root.

In the same way, the power set $\cP(\S)$ of the sign hyperfield $\S$ is a semiring, and the smallest subsemiring which contains all singletons is isomorphic to the symmetrization of the Boolean semifield. This semiring, or more precisely its extension to the symmetrization of the tropical numbers, are introduced in \cite{Plus90}; also cf.\ \cite[section 3.4]{Baccelli-Cohen-Olsder-Quadrat92}. In \cite[section 3.6]{Baccelli-Cohen-Olsder-Quadrat92}, polynomials over this semiring are treated, including the concepts of roots and their multiplicities. Their notion of roots coincides with ours, but their definition of multiplicity for roots is different from ours.
%  multiplicity of a root in \cite{Baccelli-Cohen-Olsder-Quadrat92} is based on the `tropical component' of the symmetrization of the tropical numbers, which differs from our recursive definition that captures the particularity of the sign hyperfield and that links to Descrates' rule of signs.

%In \cite[Lemma 5.7]{Izhakian-Rowen10a}, they show that every polynomial has a root, which they compare to the fundamental theo

%%%%%%%%%%%%%%%%%%%%%%%%%%%%%%%%%%%%%%%%%%%%%%%%%%%%%%%%%%%%%%%%%%%%%%%%%%%%%%%%%%%%%%%%%%%%%%%%%%%%%%%%%%%%%%%%%%%%%%%%%%%%%%%%%%%%%%%%%%%%%%%%%%%%%%%%%%%%%%%%%%%%%
%%%%%%%%%%%%%%%%%%%%%%%%%%%%%%%%%%%%%%%%%%%%%%%%%%%%%%%%%%%%%%%%%%%%%%%%%%%%%%%%%%%%%%%%%%%%%%%%%%%%%%%%%%%%%%%%%%%%%%%%%%%%%%%%%%%%%%%%%%%%%%%%%%%%%%%%%%%%%%%%%%%%%

\section{Hyperfields}
\label{section: hyperfields}

To give a rigorous definition of hyperfields, we first define a {\em binary hyperoperation} on a set $G$ to be a map
\[
 \hyperplus: \quad G\times G \quad \longrightarrow \quad \cP(G)
\]
into the power set $\cP(G)$ of $G$ such that $a\hyperplus b$ is non-empty for all $a,b \in G$.

The hyperoperation $\hyperplus$ is called {\em commutative} if $a \hyperplus b = b \hyperplus a$ for all $a,b \in G$, and {\em associative} if 
\[
\bigcup_{d\in b\hyperplus c} a\hyperplus d = \bigcup_{d\in a\hyperplus b} d\hyperplus c
% Old version
% \big\{\, a\hyperplus d\, \big| \, d\in b\hyperplus c \, \big\} = \big\{\, d\hyperplus c\, \big| \, d\in a\hyperplus b \, \big\} 
\]
for all $a,b,c \in G$.

If $\hyperplus$ is both commutative and associative, we can define the hypersum $\hypersum_{i=1}^n \ a_i$ for all $n\geq 2$ and $a_1,\ldots,a_n \in G$ by the recursive formula
\[
 \hypersum_{i=1}^n \ a_i \ = \  \bigcup_{b\in\hypersum_{i=1}^{n-1} a_i} b\hyperplus a_n.
 % Old version
%  \hypersum_{i=1}^n \ a_i \ = \  \big\{\, b\hyperplus a_n \,\big| \, b\in\hypersum_{i=1}^{n-1} a_i \, \big\}.
\]

A \emph{commutative hypergroup} is a set $G$ endowed with a commutative and associative binary hyperoperation $\hyperplus$ and a distinguished element $0 \in G$ such that for all
$a,b,c \in G$:

\begin{enumerate}[label={(HG\arabic*)}]
 \item\label{HG1} $0\hyperplus a=a\hyperplus 0=\{a\}$.  \hfill\textit{(neutral element)}
 \item\label{HG2} There is a unique element $-a$ in $G$ such that $0\in a\hyperplus (-a)$.  \hfill\textit{(inverses)}
 \item\label{HG3} $a\in b\hyperplus c$ if and only if $-b \in (-a)\hyperplus c$.    \hfill\textit{(reversibility)}
\end{enumerate}

A \emph{hyperfield} is a set $F$ together with a binary operation $\cdot$, a binary hyperoperation $\hyperplus$, and distinguished elements $0$ and $1$ such that for all $a,b,c \in F$:
\begin{enumerate}[label={(HF\arabic*)}]
 \item\label{HF1} $(F,\hyperplus,0)$ is a commutative hypergroup.
 \item\label{HF2} $(F \setminus \{0 \},\cdot,1)$ is an abelian group.
 \item\label{HF3} $a\cdot 0=0\cdot a = 0$.
 \item\label{HF4} $a\cdot(b\hyperplus c)=ab\hyperplus ac$, where $a\cdot(b\hyperplus c)=\{ad \; | \; d\in b\hyperplus c\}$.   \hfill\textit{(distributivity)}
\end{enumerate}

\medskip

We illustrate the utility of the hyperfield axioms with the following proof of Lemma~\ref{lem: roots}:

\begin{proof}[Proof of Lemma~\ref{lem: roots}]
 The case $a=0$ is easy: we have $0\in \hypersum c_ia^i=0\hyperplus\dotsb\hyperplus 0\hyperplus c_0$ if and only if $c_0=0$. On the other hand, the conditions in \eqref{root2} reduce to
 \[
 c_0=0, \; \; c_i\in 0 \hyperplus d_{i-1}=\{d_{i-1}\} \; \text{for} \; i=1,\dots n-1, \; \; \text{and} \; c_{n}=d_{n-1},
 \]
 which can be fulfilled (uniquely) by $d_i=c_{i+1}$ for $i=0,\dotsc,n-1$ if and only if $c_0=0$. This establishes the desired equivalence for $a=0$.
 
 If $a\neq 0$, then by the very definition of the hypersum of $n+1$ summands, $0\in \hypersum c_ia^i$ if and only if there is a sequence of elements $e_1,\dotsc,e_{n-1} \in F$ such that 
 \[
  e_1 \ \in \ c_0\hyperplus c_1a, \quad e_i \ \in \ e_{i-1}\hyperplus c_i a^i \quad \text{for} \quad i=2,\dotsc,n-1, \quad \text{and} \quad 0\in e_{n-1} \hyperplus c_n a^n.
 \]
 Let $d_0,\dotsc,d_{n-1}\in F$ be the unique elements satisfying $c_0=-ad_0$ and $e_i=-d_ia^{i+1}$. Then the above relations can be rewritten as
 \[
  -d_i a^{i+1} \ \in \ (-d_{i-1}a^i) \hyperplus c_i a^i \quad \text{for} \quad i=1,\dotsc,n-1, \quad \text{and} \quad -d_{n-1} a^n \ = \ -c_n a^n.
 \]
(Here we use the fact that, by \ref{HG2}, $0\in e_{n-1} \hyperplus c_n a^n$ if and only if $e_{n-1}=-c_na^n$.) 

These relations can be brought into the form in which they appear in \eqref{root2} by first multiplying each of them by $-a^{-i}$ and then using the reversibility axiom \ref{HG3} to exchange the terms $d_i$ and $-c_i$. 
\end{proof}

We also give the promised proof of Proposition~\ref{propA}:

\begin{proof}[Proof of Proposition~\ref{propA}]
 Let $a_1,\dotsc,a_n\in K$ be not necessarily distinct elements such that $\prod(T-a_i)$ divides $p$ in $K[T]$. Define $q_1=p$ and for $i=1,\dotsc,n$, define the polynomial $q_{i+1} \in K[T]$ by the property that $q_i=(T- a_i) q_{i+1}$ in $K[T]$. 
 
 To prove the proposition, assume that $p(a_1)=\dotsc=p(a_n)=b$ and that there is no $a\in K$ such that $f(a)=b$ and $q_{n+1}(a)=0$, i.e., that $a_1,\dotsc,a_n$ are all of the roots of $p$ (counted with multiplicities) having $f(a_i)=b$.   
 
 By the definition of a homomorphism of hyperfields, the relations $q_i=(T-a_i)q_{i+1}$ imply that $\overline{q_i} \in (T-b) \overline{q_{i+1}}$ over $F$, where $\overline q_i$ is the image of $q_i$ under $f$. Thus the sequence of the $\overline{q_i}$ certifies that $\mult_b(p)$ is at least $n$. This proves the first part of the proposition.
 
 If $p$ splits into linear factors and $\sum_{b\in F}\mult_b(\overline p)\leq \deg\overline p$, then the first assertion of the proposition implies that 
 \[
  \deg p \ = \ \sum_{a\in K}\mult_a(p) \ \leq \ \sum_{b\in F} \mult_b(\overline p) \ \leq  \ \deg\overline p \ = \ \deg p,
 \]
 and thus equality holds throughout. Therefore $\mult_b(\bar{p})=\sum_{a \in f^{-1}(b)} \mult_a(p)$ for all $b\in F$.
\end{proof}

\section{Multiplicities over the sign hyperfield}
\label{section: multiplicities over the sign hyperfield}

Our goal in this section is to prove Theorem~\ref{thmD}.

\medskip

Let $p=\sum c_iT^i$ be a monic polynomial over the sign hyperfield $\S$ of degree $n$. Recall that the \emph{number of sign changes in the coefficients of $p$} is
\[
 \sigma(p) \ = \ \# \ \big\{\, i \, \big| \, c_i=-c_{i+k}\neq 0\text{ and }c_{i+1}=\dotsb=c_{i+k-1}=0\text{ for some }k\geq1 \, \big\}.
\]

\begin{thm}\label{thm: the multiplicity of 1 is the number of sign changes over the sign hyperfield}
 Let $p=\sum c_iT^i$ be a monic polynomial of degree $n$ over $\S$. Then $\mult_1(p)=\sigma(p)$.
\end{thm}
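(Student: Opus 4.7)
I plan to prove this by induction on $\deg p$, using the recursive definition $\mult_1(p) = 1 + \max \mult_1(q)$ over $q$ with $p \in (T-1) q$ (when $1$ is a root; otherwise $\mult_1(p) = 0$). Note first that $1$ is a root of $p$ if and only if $\sigma(p) \geq 1$: the hypersum $\hypersum c_i$ contains $0$ precisely when both positive and negative coefficients appear, using $1 \hyperplus (-1) = \{-1,0,1\}$. Given this, the inductive step reduces to two claims which, combined with the inductive hypothesis $\mult_1(q) = \sigma(q)$, yield $\mult_1(p) = \sigma(p)$:
\begin{enumerate}
\item[(A)] if $p \in (T-1) q$, then $\sigma(q) \leq \sigma(p) - 1$;
\item[(B)] there exists $q$ with $p \in (T-1) q$ and $\sigma(q) = \sigma(p) - 1$.
\end{enumerate}

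For (A), I would extend $d_{-1} = d_n = 0$ and observe that the relation $c_i \in -d_i \hyperplus d_{i-1}$ over $\S$ is deterministic except when $d_{i-1}$ and $d_i$ are both nonzero of the same sign (the unique ``free'' case, where $c_i \in \{-1,0,1\}$ is unconstrained). Grouping the nonzero coefficients of $q$ into $m = \sigma(q) + 1$ maximal consecutive same-sign blocks with alternating signs $\epsilon_1, \dotsc, \epsilon_m$, a short case analysis identifies the forced nonzero values of $c$, in position order: a leading $-\epsilon_1$, one $\epsilon_j$ per block-transition $B_j \to B_{j+1}$, a pair $(\epsilon_j, -\epsilon_j)$ for each within-block position-gap inside $B_j$, and a final $\epsilon_m$ (forced by monicity). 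This forced subsequence is alternating, so has exactly $m + 2G$ sign changes, where $G$ is the total number of within-block gaps; since sign changes in a sequence bound those of any subsequence from below, $\sigma(p) \geq m + 2G \geq m = \sigma(q) + 1$.

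For (B), by reversibility the condition $c_i \in -d_i \hyperplus d_{i-1}$ is equivalent (upon setting $\tau_i := -d_i$) to $\tau_i \in \tau_{i-1} \hyperplus c_i$ with boundary conditions $\tau_0 = c_0$ and $\tau_{n-1} = -c_n$; the latter is achievable since $0 \in \hypersum c_i$ combined with uniqueness of hyperfield inverses yields $-c_n \in \hypersum_{i<n} c_i$. Writing $\gamma_1, \dotsc, \gamma_r$ for the signs of the nonzero coefficients of $p$ at positions $i_1 < \dotsb < i_r = n$, partitioned into alternating sign-blocks of lengths $l_1, \dotsc, l_{k+1}$ (where $k = \sigma(p)$), and setting $l := l_{k+1}$, I would prescribe
\[
 \hat\tau_j \;:=\; \tau_{i_j} \;=\; \begin{cases} \gamma_j & \text{if } 1 \leq j \leq r - l,\\ -\gamma_r & \text{if } r - l < j < r,\end{cases}
\]
and then extend $\tau_i$ to all indices by constancy on the stretches where $c_i = 0$. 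Every required hypersum inclusion $\hat\tau_j \in \hat\tau_{j-1} \hyperplus \gamma_j$ is valid (in the ``flipped tail'' the transitions fall in the free case $\epsilon_k \hyperplus \epsilon_{k+1} = \{-1,0,1\}$), the final constraint $\hat\tau_{r-1} = -\gamma_r$ holds by construction, and $\sigma(\hat\tau) = k - 1$: the initial segment $(\gamma_1, \dotsc, \gamma_{r-l})$ traverses the first $k$ sign-blocks of $\gamma$ contributing $k - 1$ sign changes, and the constant tail matches $\gamma_{r-l} = \epsilon_k = -\gamma_r$ without further change.

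The main obstacle is the construction in (B): one must exhibit an explicit $q$ that simultaneously satisfies every hyperaddition constraint \emph{and} attains the maximum $\sigma(q) = \sigma(p) - 1$. The subtlety is that the endpoint condition $\hat\tau_{r-1} = -\gamma_r$ forces a deviation from the naive ``alternate whenever free'' strategy exactly when the last two nonzero coefficients of $p$ share a sign, so explicitly flipping the tail of $\hat\tau$ is necessary to hit the maximum.
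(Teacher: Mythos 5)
Your proposal is correct, and its skeleton is the same as the paper's: reduce the theorem to the identity $\sigma(p)=1+\max\{\sigma(q)\mid p\in(T-1)q\}$ (your claims (A) and (B)) and then conclude by induction via the recursive definition of $\mult_1$. The two sub-claims, however, are executed differently. For the upper bound (A), the paper runs a descending induction on the index $i$ with the strengthened invariant $\sigma_i(q)\le\sigma_i(p)$, plus an extra $+1$ when $d_i=-c_i\neq 0$, after some normalizations that reduce the argument to four cases; you instead read off which coefficients of $p$ are \emph{forced} by $q$ (the relation $c_i\in(-d_i)\hyperplus d_{i-1}$ is single-valued except when $d_{i-1}=d_i\neq 0$) and invoke monotonicity of sign changes under passing to subsequences. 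One small imprecision: at a block transition of $q$ with intervening zero coefficients, \emph{two} equal values $\epsilon_j$ are forced rather than one, so the forced subsequence is alternating only after collapsing consecutive duplicates — but this does not affect the count $m+2G$, and the inequality $\sigma(p)\ge\sigma(q)+1$ stands. For the lower bound (B), the paper writes down $q_0$ directly in terms of the $c_i$ and leaves the verification to the reader; your reformulation via reversibility as a ``partial hypersum path'' $\tau_0=c_0$, $\tau_i\in\tau_{i-1}\hyperplus c_i$, $\tau_{n-1}=-c_n$, with the tail flipped on the last sign block of $p$, is an equivalent construction (indeed $\tau_i=-d_i$ recovers essentially the paper's $q_0$ up to sign conventions), and you actually supply the verification the paper omits. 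Both routes are valid; yours trades the paper's mechanical case analysis for a more structural description of the constraints, at the cost of having to be careful about the duplicate forced values noted above and about the initial zero stretch when $c_0=0$ (there $\tau$ must start at $0$, not at $\gamma_1$, which your ``constancy'' convention handles once stated precisely).
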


\begin{proof}
 The main effort of the proof consists in showing that if $\sigma(p)>0$ then
 \[
  \sigma(p) \ = \ 1+\max\big\{ \, \sigma(q) \, \big| \, p\in (T-1)q \, \big\}. 
 \]
 Once we have shown this, we can conclude the proof of the theorem by induction on $\sigma(p)$. If $\sigma(p)=0$, then $0\not\in p(1)=1\hyperplus \dotsb \hyperplus 1$ and thus $\mult_1(p)=0$. If $\sigma(p)>0$, then $0\in p(1)=c_{n}\hyperplus \dotsb \hyperplus c_0$ since there is a sign change, and
 \[
  \sigma(p) \ = \ 1+\max\{  \sigma(q) \,\big| \, p\in (T-1)q  \big\} \ = \ 1+\max\{  \mult_1(q) \, \big| \, p\in (T-1)q  \big\} \ = \ \mult_1(p),
 \]
 where we use the inductive hypothesis for the second equality and the definition of $\mult_1(p)$ for the last equality.
 
 We proceed with showing that the maximum of the values $\sigma(q)$ with $p\in (T-1)q$ is $\sigma(p)-1$. Let $q=\sum d_iT^i$ be a polynomial over $\S$ such that $p\in (T-1)q$. This means that $\deg q=\deg p-1$ and
 \[
d_0=-c_0, \; \; c_i\in -d_i\hyperplus d_{i-1} \; \text{for} \; i=1,\dotsc,n-1, \text{and} \; \; d_{n-1}=c_n=1.
 %  d_{n-1}=c_n=1, \qquad d_0=-c_0 \qquad \text{and} \qquad c_i\in -d_i\hyperplus d_{i-1} \qquad \text{for} \qquad i=1,\dotsc,n-1.
 \]
 The strategy of the proof is to bound the number of sign changes in $q$ by the number of sign changes in $p$ in decreasing order of $i$.

 Let $\sigma_i(p)$ be the number of sign changes in the sequence of coefficients $c_n,\dotsc,c_i$ of $p$, i.e.,\ 
 \[
  \sigma_i(p) \ = \ \#\, \big\{k\geq i \, \big| \, c_{k}=-c_{k+l+1}\neq0\text{ and }c_{k+1}=\dotsb=c_{k+l}=0\text{ for some }l\geq0 \, \big\}.
 \]
 Let $\sigma_i(q)$ be the number of sign changes in the sequence of coefficients $d_{n-1},\dotsc,d_i$ of $q$, which is defined analogously to $\sigma_i(p)$. 
 
 We claim that $\sigma_i(q)\leq\sigma_i(p)$ for all $i=0,\dotsc,n$, with $\sigma_i(q)+1\leq\sigma_i(p)$ if $d_i=-c_i\neq0$. We will prove this claim by descending induction on $i$. If $i=n$, then $\sigma_i(q)=\sigma_i(p)=0$, which proves our claim in this case since $d_n=0\neq -c_n$.
 
 Before explaining the inductive step, we begin with some preliminary observations which allow us to simplify the situation and limit the number of cases that we have to consider. Namely, if $0\neq c_i$ and $c_i\neq -c_{i+1}$ as well as $0\neq d_i$ and $d_i\neq -d_{i+1}$, then we have $\sigma_i(p)=\sigma_{i+1}(p)$ and $\sigma_i(q)=\sigma_{i+1}(q)$. Thus we do not change the values of $\sigma_i(p)$ and $\sigma_i(q)$ if we omit $c_{i+1}$ and $d_{i+1}$ from the sequences $c_n,\dotsc,c_i$ and $d_{n-1},\dotsc,d_i$. Therefore we may assume without loss of generality that this situation does not occur. We may similarly assume that $c_0\neq0$, since otherwise $d_0=-c_0=0$ and thus $\sigma_0(p)=\sigma_{1}(p)$ and $\sigma_0(q)=\sigma_{1}(q)$. 
 
 These assumptions and the relation $p\in(T-1)q$ have the following consequences for $i=0,\dotsc,n-1$:
 \begin{enumerate}
  \item\label{pre1} We have $c_{i+1}\neq 0$. Indeed, if $c_{i+1}=0$, then $c_{i+1}\in-d_{i+1}\hyperplus d_i$ implies that $d_{i+1}=d_{i}$. But this situation is excluded by our assumptions.
  \item\label{pre2} If $d_{i+1}=-d_i$, then $c_{i+1}\in-d_{i+1}\hyperplus d_i$ implies that $c_{i+1}=d_i=-d_{i+1}$.
  \item\label{pre3} If $c_i=-d_i$, then we have $c_{i+1}=d_i=-c_i$. Indeed, if $c_{i+1}=c_i$ then $c_{i+1}\in-d_{i+1}\hyperplus d_i$ implies $d_{i+1}=d_i$, which is excluded by our assumptions.
 \end{enumerate}

 Assume that $i<n$. We prove the inductive step of our claim by considering the following four constellations of possible values for $c_i$, $d_i$, and $d_{i+1}$. (We indicate usage of the inductive hypothesis in the following relations by ``(IH)''.)
 
 \medskip\noindent\textbf{Case 1:} {$d_{i+1}\neq -d_i$ and $c_i\neq -d_i$.}  In this case, we obtain
 \[
  \sigma_i(q) \ = \ \sigma_{i+1}(q) \ \underset{(IH)}\leq \ \sigma_{i+1}(p) \ \leq \ \sigma_i(p).
 \]
 
 \medskip\noindent\textbf{Case 2:} {$d_{i+1}=-d_i$ and $c_i\neq -d_i$.} By \eqref{pre1} and \eqref{pre2}, we have $c_{i+1}=-d_{i+1}=d_i=c_i$, and thus 
 \[
  \sigma_i(q) \ = \ \sigma_{i+1}(q)+1 \ \underset{(IH)}\leq \ \sigma_{i+1}(p) \ = \ \sigma_i(p).
 \]
 
 \medskip\noindent\textbf{Case 3:} {$d_{i+1}\neq -d_i$ and $c_i=-d_i$.} By \eqref{pre3}, we have $c_{i+1}=d_i=-c_i$, and thus
 \[
  \sigma_i(q)+1 \ = \ \sigma_{i+1}(q)+1 \ \underset{(IH)}\leq \ \sigma_{i+1}(p)+1 \ = \ \sigma_i(p).
 \]

 \medskip\noindent\textbf{Case 4:} {$d_{i+1}=-d_i$ and $c_i=-d_i$.} By \eqref{pre3}, we have $c_{i+1}=d_i=-c_i=-d_{i+1}$, and thus
 \[
  \sigma_i(q)+1 \ = \ \sigma_{i+1}(q)+2 \ \underset{(IH)}\leq \ \sigma_{i+1}(p)+1 \ = \ \sigma_i(p).
 \]
 This concludes the proof of our claim. 
 
 Note that $\sigma(p)=\sigma_0(p)$ and $\sigma(q)=\sigma_0(q)$. Since $d_0=-c_0$ and $q$ was chosen arbitrarily with respect to the property $p\in (T-1)q$, this shows that 
 \[
  \sigma(p) \ \geq \ 1+\max\big\{ \, \sigma(q) \, \big| \, p\in (T-1)q \, \big\}. 
 \]

 To complete the proof of the theorem, we have to show that there is a $q_0$ with $p\in (T-1)q_0$ and $\sigma(q_0)+1=\sigma(p)$. We define $q_0=\sum d_iT^i$ as follows. Let $k$ be the number such that $c_0=\dotsc=c_{k}=-c_{k+1}$, and define
 \begin{align*}
  d_i &= c_{i+1}   && \text{if }c_{i+1}\neq 0\text{ and }i>k; \\
  d_i &= d_{i+1}   && \text{if }c_{i+1}=0\text{ and }i>k; \\
  d_i &= -c_{0}   && \text{if }i\leq k.
 \end{align*}
 We leave the easy verification that  $p\in (T-1)q_0$ and $\sigma(q_0)+1=\sigma(p)$ to the reader.
\end{proof}

%%%%%%%%%%%%%%%%%%%%%%%%%%%%%%%%%%%%%%%%%%%%%%%%%%%%%%%%%%%%%%%%%%%%%%%%%%%%%%%%%%%%%%%%%%%%%%%%%%%%%%%%%%%%%%%%%%%%%%%%%%%%%%%%%%%%%%%%%%%%%%%%%%%%%%%%%%%%%%%%%%%%%
%%%%%%%%%%%%%%%%%%%%%%%%%%%%%%%%%%%%%%%%%%%%%%%%%%%%%%%%%%%%%%%%%%%%%%%%%%%%%%%%%%%%%%%%%%%%%%%%%%%%%%%%%%%%%%%%%%%%%%%%%%%%%%%%%%%%%%%%%%%%%%%%%%%%%%%%%%%%%%%%%%%%%

\section{Multiplicities of tropical roots} 
\label{section: multiplicities of tropical roots}

Our goal in this section is to prove Theorem~\ref{thmN}. 
Our proof is based on a hyperfield version (Theorem~\ref{thm: fundamental theorem for the tropical hyperfield} below) of the so-called ``Fundamental theorem of tropical algebra'' (cf.~Lemma \ref{lemma: fundamental theorem of tropical algebra}).

% We remark that Theorem \ref{thm: fundamental theorem for the tropical hyperfield} is surprisingly difficult to prove in contrast to the fundamental theorem of tropical algebra, which we cite and reprove in Lemma \ref{lemma: fundamental theorem of tropical algebra}.

Let $p=\sum c_iT^i$ be a monic polynomial of degree $n$ over $\T$ and let $a_1,\dotsc,a_n\in\T$. We write $p\in \prod(T+a_i)$ if
\[
 c_{n-i} \in \underset{e_1<\dotsb<e_{i}}\hypersum a_{e_1}\dotsb a_{e_i}
\]
for all $i=1,\dotsc,n$.

\begin{thm}[Fundamental theorem for the tropical hyperfield]\label{thm: fundamental theorem for the tropical hyperfield}
 Let $p=\sum_{i=0}^n c_i T^i$ be a monic polynomial of degree $n$ over $\T$. Then: 
 \begin{enumerate}
 \item\label{fund1} There is a unique sequence $a_1,\dotsc,a_n\in\T$, up to permutation of the indices, such that $p\in \prod(T+a_i)$.
 \item\label{fund2} For every $a\in\T$, we have equalities
                    \[
                     \mult_a(p) \ = \ \#\big\{ \, i\in\{1,\dotsc,n\} \, \big| \, a=a_i \, \big\} \ = \ v_p(a).
                    \]
 \end{enumerate}
\end{thm}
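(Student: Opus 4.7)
My plan is to handle the existence of the factorization in (1) by reading the $a_i$ off the Newton polygon of $p$, to deduce uniqueness in (1) from the convex hull characterization of ${\rm NP}(p)$, and then to establish the multiplicity statement in (2) by induction on $\deg p$ via an elementary symmetric hypersum identity. For existence, let $-s_1<-s_2<\dotsb<-s_t$ be the slopes of ${\rm NP}(p)$ with horizontal lengths $\lambda_1,\dotsc,\lambda_t$, and define $a_1\leq\dotsb\leq a_n$ by listing each $s_j$ with multiplicity $\lambda_j$ in increasing order; a direct computation then yields ${\rm NP}(p)(n-i)=a_1+\dotsb+a_i$ for every integer $i\in[0,n]$. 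A short case analysis of the hypersum in $\T$ shows
\[
\hypersum_{e_1<\dotsb<e_i}a_{e_1}\dotsb a_{e_i} \;=\; \begin{cases}\{a_1+\dotsb+a_i\} & \text{if }a_i<a_{i+1},\\ [a_1+\dotsb+a_i,\infty] & \text{if }a_i=a_{i+1},\end{cases}
\]
and these two cases correspond exactly to whether $n-i$ is a vertex of ${\rm NP}(p)$. In either case $c_{n-i}$ lies in this hypersum (equal to the polygon value at vertices, at least the polygon value between vertices by convexity), so $p\in\prod(T+a_i)$. For uniqueness in (1), if $b_1\leq\dotsb\leq b_n$ also satisfies $p\in\prod(T+b_i)$, reading the same formula in reverse gives $c_{n-i}\geq b_1+\dotsb+b_i$ with equality whenever $b_i<b_{i+1}$. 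Hence the convex polygonal line through the points $(n-i,b_1+\dotsb+b_i)$ lies weakly below every $(j,c_j)$ and meets those at which it has a vertex; it therefore equals ${\rm NP}(p)$, forcing $\{b_i\}=\{a_i\}$. The equality $\#\{i\mid a_i=a\}=\nu_a(p)$ in (2) is then immediate from the construction of the $a_i$.

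For the remaining equality $\mult_a(p)=\#\{i\mid a_i=a\}$, I induct on $n=\deg p$, the case $n=0$ being trivial. If $a\notin\{a_i\}$, the characterization that $a$ is a root of $p$ iff $\min_i(c_i+ia)$ is attained at least twice (equivalently, iff $-a$ is a slope of ${\rm NP}(p)$) gives $\mult_a(p)=0=\#\{i\mid a_i=a\}$. Otherwise set $m:=\#\{i\mid a_i=a\}\geq 1$. By Lemma~\ref{lem: roots} at least one monic $q$ of degree $n-1$ with $p\in(T+a)q$ exists. The heart of the argument is the \emph{Key Lemma}: the canonical sequence of every such $q$ is $\{a_i\}\setminus\{a\}$ (one copy of $a$ removed). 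Granting this, the inductive hypothesis forces $\mult_a(q)=m-1$ for every valid $q$, so Definition~\ref{df:multiplicity} gives $\mult_a(p)=1+\max_q\mult_a(q)=m$.

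To prove the Key Lemma, let $q\in\prod(T+b_j)$ be the canonical factorization of $q$ from (1), so that $d_k\in e_{n-1-k}(\{b_j\})$, where $e_k(S):=\hypersum_{U\subseteq S,\,|U|=k}\prod_{u\in U}u$ denotes the $k$-th elementary symmetric hypersum of a multi-set $S$. Partitioning the $k$-subsets of $\{a,b_1,\dotsc,b_{n-1}\}$ by whether they contain $a$, together with associativity of $\hyperplus$ and distributivity (HF4), yields the Pascal-type identity of subsets of $\T$
\[
e_k\bigl(\{a,b_1,\dotsc,b_{n-1}\}\bigr)\;=\;a\cdot e_{k-1}\bigl(\{b_j\}\bigr)\hyperplus e_k\bigl(\{b_j\}\bigr).
\]
Combining $c_i\in(ad_i)\hyperplus d_{i-1}$ with the inclusions $d_i\in e_{n-1-i}(\{b_j\})$ and $d_{i-1}\in e_{n-i}(\{b_j\})$, this identity yields $c_i\in e_{n-i}(\{a,b_1,\dotsc,b_{n-1}\})$ for every $i$; that is, $p\in\prod(T+c^\ast_l)$ for the multi-set $\{c^\ast_l\}=\{a,b_1,\dotsc,b_{n-1}\}$. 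Uniqueness in (1) applied to $p$ forces $\{c^\ast_l\}=\{a_i\}$, so $\{b_j\}=\{a_i\}\setminus\{a\}$ as desired. The main obstacle I expect is the careful bookkeeping for this Pascal-type hypersum identity at the level of power sets, where one must track that element-wise membership is preserved under the associativity and distributivity manipulations; once the identity is in place, the rest reduces to the Newton polygon analysis already carried out in (1).
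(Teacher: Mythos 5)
Your proposal is correct, and its overall architecture matches the paper's: first establish existence and uniqueness of the factorization $p\in\prod(T+a_i)$, then show that any $q$ with $p\in(T+a)q$ has factorization multiset $\{a_i\}\setminus\{a\}$, and finally run the induction for the multiplicity formula. In particular, your ``Key Lemma'' and its proof are precisely the paper's Lemma~\ref{lemma: polynomial division by linear factors for the tropical hyperfield}: there too one applies part~\eqref{fund1} to $q$, combines $c_i\in (ad_i)\hyperplus d_{i-1}$ with the elementary-symmetric-hypersum inclusions via exactly your Pascal-type splitting of the $(n-i)$-subsets according to whether they contain $a$, and then invokes uniqueness for $p$. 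The one genuine divergence is in part~\eqref{fund1}: the paper passes to tropical polynomial \emph{functions} $\overline p:\overline\R\to\overline\R$, cites the classical fundamental theorem of tropical algebra (Lemma~\ref{lemma: fundamental theorem of tropical algebra}) and the standard Newton-polygon lemma (Lemma~\ref{lemma: roots of a tropical polynomials as slopes of the Newton polygon}), and supplies the hyperfield-theoretic bridge in Proposition~\ref{prop: characterization of functional equivalence over the tropical hyperfield}, whereas you work directly in the Legendre-dual picture of the Newton polygon. Your computation of $\hypersum_{e_1<\dotsb<e_i}a_{e_1}\dotsb a_{e_i}$ (a singleton when $a_i<a_{i+1}$, a ray $[a_1+\dotsb+a_i,\infty]$ otherwise) and your convexity argument for uniqueness are sound, and this route has the advantage of being self-contained, at the cost of redoing by hand the combinatorics that the paper outsources to the cited results; the identification $\#\{i\mid a_i=a\}=\nu_a(p)$ then falls out of the same bookkeeping in both treatments.
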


The rest of this section is devoted to the proof of Theorem \ref{thm: fundamental theorem for the tropical hyperfield}. The main idea of the proof is to consider polynomials over the tropical hyperfield $\T$ as functions from the \textit{tropical semifield} $\overline \R$ to itself, and to compare the hyperfield and semifield perspectives. 

As a set, $\overline\R=\R\cup\{\infty\}$ is equal to $\T$, and they have the same multiplication as well: the product $ab \in \overline\R$ is defined as the sum of the corresponding extended real numbers. The difference between $\overline\R$ and $\T$ appears in the addition law: the sum of two elements $a$ and $b$ of $\overline\R$ is defined as $\min\{a,b\}$, which is an \textit{element} of $\overline\R$, opposed to the \textit{subset} $a\hyperplus b$ of $\T$. 

To avoid confusion between tropical addition and usual addition (i.e.,\ tropical multiplication!), we will adhere strictly to the following conventions. We denote elements of $\T$ by $a,b,c,d$ and elements of $\overline\R$ by $\overline a,\overline b,\overline c,\overline d$. Given an element $a\in \T$, we write $\overline a$ if we consider it as an element of $\overline\R$. We keep the previously established notations for $\T$, i.e.\ the hypersum of $a$ and $b$ is denoted by $a\hyperplus b$ and their product by $ab$. We denote the tropical sum of two elements $\overline a$ and $\overline b$ of $\overline\R$ by $\min\{\overline a,\overline b\}$ and their tropical product by $\overline a+\overline b$. We write $i\cdot \overline a$ for the $i$-fold sum $\overline a+\dotsb+\overline a$ of $\overline a$ with itself. 

A nontrivial polynomial $p=\sum c_iT^i$ of degree $n$ over $\T$ defines a function
\[
 \begin{array}{cccc}
  \overline p: & \R & \longrightarrow & \R \\
               & \overline b  & \longmapsto     & \min\limits_{i=0,\dotsc,n} \big\{\, \overline c_i + i \cdot \overline b \, \big\},
 \end{array}
\]
which we sometimes extend to a function $\overline\R\to\overline\R$ via $\overline p(\infty)=\infty$. The trivial polynomial yields the \emph{trivial function} $\overline b\mapsto\infty$. 

We say that two polynomials $p=\sum c_i T^i$ and $q=\sum d_iT^i$ over $\T$ are \emph{functionally equivalent}, denoted $\overline p=\overline q$, if they define the same function $\overline\R\to\overline\R$. We call a function $\overline p:\overline\R\to\overline\R$ as above a \emph{tropical (polynomial) function} and denote it by $\overline p=\min\{\overline c_i+i\cdot T\}$. The \emph{degree of $\overline p$} is the degree of $p$ and $\overline p$ is \emph{monic} if $p$ is monic. Note that both notions are independent of the choice of the representing polynomial $p$. Note further that the set of tropical functions inherits the structure of a semiring from $\overline\R$ by adding and multiplying functions valuewise. 

It is well-known that every tropical function factors uniquely into a product of linear functions. This result is sometimes referred to as the ``fundamental theorem of tropical algebra'', and it was first proven in \cite[Thm.\ 11]{Cunninghame-Green-Meijer80}; see also \ \cite[Thm.\ 3.43]{Baccelli-Cohen-Olsder-Quadrat92}.

\begin{lemma}[Fundamental theorem of tropical algebra]\label{lemma: fundamental theorem of tropical algebra}
 For every monic tropical function $\overline p=\min\{\overline c_i+i\cdot T\}$ of degree $n$, there is a unique sequence $\overline a_1,\dotsc,\overline a_n\in\overline\R$, up to a permutation of indices, such that $\overline p=\sum_{i=1}^n \min\{T,\overline a_i\}$ as tropical functions. \qed
\end{lemma}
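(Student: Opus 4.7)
My approach is to exploit the fact that $\overline p\colon\overline\R\to\overline\R$ is a concave, piecewise-linear function (being the pointwise minimum of finitely many affine functions), and to read off the sequence $\overline a_1,\dots,\overline a_n$ from its breakpoints and slopes. Since $p$ is monic, $\overline c_n=0$, so the leftmost asymptotic slope of $\overline p$ equals $n$; the rightmost asymptotic slope equals $i_0:=\min\{i\mid c_i\neq 0\}$. The function $\overline p$ has finitely many breakpoints $\overline b_1<\dots<\overline b_t\in\R$ at which its slope drops by positive integers $\lambda_1,\dots,\lambda_t$, and these satisfy $\sum_j \lambda_j=n-i_0$. (The $-\overline b_j$ and $\lambda_j$ are precisely the data encoded in the Newton polygon ${\rm NP}(p)$ of Definition~\ref{def:NP}.)

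For existence, I would define $\overline a_1,\dots,\overline a_n$ to be the list obtained by taking each breakpoint $\overline b_j$ with multiplicity $\lambda_j$, padded with $i_0$ copies of $\infty$ (using that $\min\{T,\infty\}=T$). Then $\overline q:=\sum_{i=1}^n \min\{T,\overline a_i\}$ is concave piecewise-linear with leftmost slope $n$, rightmost slope $i_0$, and slope drop exactly $\lambda_j$ at each $\overline b_j$; so $\overline p$ and $\overline q$ have identical slope profiles. Any two concave piecewise-linear functions whose slopes agree everywhere differ by a global constant, and that constant is forced to vanish by comparing asymptotic behavior as $T\to-\infty$ (both functions equal $n\cdot T$ there). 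Hence $\overline p=\overline q$.

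For uniqueness, if $\overline p=\sum_{i=1}^n \min\{T,\overline a_i\}$, then the finite $\overline a_i$ are precisely the finite breakpoints of $\overline p$, each occurring with multiplicity equal to the slope drop of $\overline p$ at that point, while the number of indices with $\overline a_i=\infty$ is then forced by $n$ minus the total finite slope drop. Thus the multiset $\{\overline a_1,\dots,\overline a_n\}$ is an invariant of $\overline p$.

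The main delicacy I expect lies in correctly handling the boundary case $\overline c_0=\infty$ (which corresponds to roots at $0\in\T$) and in carefully verifying that the constants match once the slope profiles are forced to agree; these are routine but warrant care. Otherwise the argument reduces to an elementary exercise in piecewise-linear convex geometry, built on the clean dictionary between the tropical function $\overline p$ and its Newton polygon.
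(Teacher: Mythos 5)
Your argument is correct and is essentially the paper's own: the lemma is stated there as a known result (cited to Cunninghame-Green--Meijer and Baccelli et al.), and the intended proof is exactly your piecewise-linear concavity argument --- read the $\overline a_i$ off the breakpoints of $\overline p$ with multiplicities given by the slope drops, pad with copies of the tropical zero $\infty$, and fix the additive constant by matching the slope-$n$ branch as $\overline b\to-\infty$. One notational slip: ``nonzero coefficient'' should mean $\overline c_i\neq\infty$ (the tropical zero), not $\overline c_i\neq 0$.
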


\begin{comment}
\begin{proof}
 It is clear from the definition of $\overline p$ that it is a continuous, piecewise linear, concave function with integer slopes between $0$ and $n$. Thus there are uniquely determined (not necessarily distinct) elements $\overline a_1\leq \dotsb \leq \overline a_n\leq \overline a_{n+1}=\infty$ in $\overline\R$ such that 
 \[
  \overline p(\overline b) \ = \ \begin{cases}
                                  n\cdot \overline b & \text{if }\overline b<\overline a_1, \\ 
                                  \overline c_i+i\cdot \overline b & \text{if }\overline a_{i}\leq \overline b< \overline a_{i+1}.
                                 \end{cases}
 \]
 An easy induction over these linear segments of $\overline p$, beginning with $\overline b<\overline a_1$, shows that $\overline p$ is functionally equivalent to $\sum_{i=1}^n\min\{T,\overline a_i\}$, as claimed.
\end{proof}
\end{comment}

The second equality in part \ref{fund2} of Theorem \ref{thm: fundamental theorem for the tropical hyperfield} follows from the usual arguments in the theory of Newton polygons; in particular, we have the following well-known fact (see \cite{Casselmann18} or \cite[section 9]{Cunninghame-Green-Meijer80}, for example, for proofs):
% we recommend \cite[Chapter II, Prop.\ 6.3]{Neukirch99}, \cite[section I.6]{Dwork-Gerotto-Sullivan94}, or

\begin{lemma}\label{lemma: roots of a tropical polynomials as slopes of the Newton polygon}
 Let $p=\sum c_iT^i$ be a monic polynomial of degree $n$ over $\T$ and let $a_1,\dotsc,a_n\in\T$ be such that $\overline p=\sum_{i=1}^n \min\{T,\overline a_i\}$. Let $a\in\overline\R$. Then $\#\{i \; | \; a=a_i\}=v_p(a)$. \qed
\end{lemma}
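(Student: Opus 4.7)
The plan is to exploit the duality between the piecewise-affine function $\overline p$ and the Newton polygon of $p$. The given factorization describes the slopes of $\overline p$ explicitly in terms of the $\overline a_i$, while the defining formula of $\overline p$ describes those same slopes in terms of $\mathrm{NP}(p)$; comparing the two descriptions yields the lemma.

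First I would reorder so that $\overline a_1 \leq \dotsb \leq \overline a_n$ in $\overline\R$ and adopt the conventions $\overline a_0=-\infty$, $\overline a_{n+1}=\infty$. Unfolding $\overline p=\sum_{i=1}^n \min\{T,\overline a_i\}$ gives the explicit piecewise description
\[
 \overline p(\overline b) \ = \ \overline a_1+\dotsb+\overline a_k+(n-k)\overline b \qquad \text{for } \overline a_k \leq \overline b \leq \overline a_{k+1},
\]
so that $\overline p$ is continuous, concave and piecewise affine with integer slopes decreasing from $n$ (for $\overline b<\overline a_1$) to $0$ (for $\overline b>\overline a_n$). In particular, if $\overline a\in\overline\R$ equals exactly $m$ of the $\overline a_i$, the slope of $\overline p$ drops by $m$ as $\overline b$ crosses $\overline a$, and is unchanged at $\overline a$ if no $\overline a_i$ equals $\overline a$.

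Next I would translate slope drops of $\overline p$ into Newton-polygon data. The formula $\overline p(\overline b)=\min_i\{\overline c_i+i\cdot\overline b\}$ presents $\overline p$ as the pointwise minimum of the affine functions $\overline b\mapsto \overline c_i+i\cdot\overline b$; by Legendre duality this minimum is controlled by the lower convex hull of the points $(i,\overline c_i)$, i.e., by ${\rm NP}(p)$. Concretely, on any maximal open interval where $\overline p$ has slope $j$, the minimum is attained by the single term of index $j$ and $(j,c_j)$ is a vertex of ${\rm NP}(p)$. As $\overline b$ crosses a breakpoint $\overline a$ of $\overline p$, the slope jumps down from $j_+$ to $j_-$, where $(j_-,c_{j_-})$ and $(j_+,c_{j_+})$ are the two endpoints of the segment of ${\rm NP}(p)$ of slope $-\overline a$; the drop $j_+-j_-$ is by definition the projected length $\nu_a(p)$. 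If ${\rm NP}(p)$ has no segment of slope $-\overline a$, then $\overline a$ is not a breakpoint of $\overline p$ and both quantities vanish.

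Combining the two computations yields $\#\{i\mid \overline a_i=\overline a\}=\nu_a(p)$. The main technical obstacle is justifying the Legendre-duality correspondence in the second step cleanly; although standard in tropical geometry, it requires careful bookkeeping. An appealing alternative that avoids invoking duality as a black box is a short induction on $n$: dividing $\overline p$ by $\min\{T,\overline a_1\}$ corresponds to removing the leftmost segment of ${\rm NP}(p)$ (which has slope $-\overline a_1$ and projected length $1$), after which the inductive hypothesis applies to the quotient polynomial.
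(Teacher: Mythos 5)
Your argument is correct and takes essentially the same route as the paper's own proof (which the paper defers to the literature): both compute the slope drop of the concave piecewise-affine function $\overline p$ at $\overline a$ in two ways, once from the factorization $\sum\min\{T,\overline a_i\}$ and once by identifying the indices attaining the minimum with the lattice points of ${\rm NP}(p)$ lying on the supporting line of slope $-\overline a$. The Legendre-duality bookkeeping you flag as the technical obstacle is exactly what the paper handles via the set $I_a=\{\,i \mid \overline c_i+i\,\overline a=\overline p(\overline a)\,\}$, whose extremes are the endpoints of the relevant segment.
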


\begin{comment}
\begin{proof}
 The number $\#\{i \; | \; a=a_i\}$ is equal to the negative of the change of slope of $\overline p=\sum_{i=1}^n \min\{T,\overline a_i\}$ at $a$, which in turn is equal to $\min \, I_a-\max \, I_a$, where
 \[
  I_a \ = \ \big\{ \, i\in\{0,\dotsc,n\} \, \big| \, \overline c_i+i\,\overline a=\overline p(\overline a) \, \big\}.
 \]
 In order to explain the relation to the Newton polygon, we observe that the equation
 \[
  \overline a\cdot x+y \ = \ \overline p(a) \qquad \text{or, equivalently,} \qquad y \ = \ -\overline a\cdot x+\overline p(\overline a)
 \]
 defines a line $L$ in the $(x,y)$-plane. For every $i$, we have $\overline c_i\geq -\overline a\cdot i+\overline p(\overline a)$, and a point $(j,\overline c_j)$ lies on $L$ if and only if $j\in I_a$. Therefore a line segment $L_k$ of the Newton polygon lies on $L$ if and only if two distinct points $(j_1,\overline c_{j_1})$ and $(j_2,\overline c_{j_2})$ lie on $L$. In this case, the slope is $s_k=\overline a$ and $v_{\overline a}(p)=v_k=\max \, I_a-\min \, I_a$. Since $\#\{i \; | \; a=a_i\}=-(\min \, I_a-\max \, I_a)$, this completes the proof.
\end{proof}
\end{comment}

The rest of the proof of Theorem \ref{thm: fundamental theorem for the tropical hyperfield} is novel. Part \eqref{fund1} follows immediately from the following proposition, coupled with Lemma \ref{lemma: fundamental theorem of tropical algebra}.

\begin{prop}\label{prop: characterization of functional equivalence over the tropical hyperfield}
 Let $p$ be a monic polynomial of degree $n$ over $\T$ and let $a_1,\dotsc,a_n\in\T$. Then $p\in\prod(T+a_i)$ if and only if $\overline p=\sum\min\{T,\overline a_i\}$ as tropical functions.
\end{prop}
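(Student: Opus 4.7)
The plan is to translate both the hyperfield relation $p\in\prod(T+a_i)$ and the functional equality $\overline{p}=\sum\min\{T,\overline{a_i}\}$ into the same explicit condition on the coefficients of $p$. Since both conditions are symmetric in the $a_i$'s, I would first reduce to the case $\overline{a_1}\leq\dotsb\leq\overline{a_n}$, so that the tropical elementary symmetric sums satisfy $\overline{\sigma_j}:=\min_{e_1<\dotsb<e_j}(\overline{a_{e_1}}+\dotsb+\overline{a_{e_j}})=\overline{a_1}+\dotsb+\overline{a_j}$ for each $j\in\{0,\dotsc,n\}$.

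For the hyperfield side, I would analyze the elementary symmetric hypersum $e_j:=\hypersum_{e_1<\dotsb<e_j}a_{e_1}\dotsb a_{e_j}\subset\T$ according to whether the minimum $\overline{\sigma_j}$ is attained by a unique tuple. A short combinatorial argument shows that uniqueness holds precisely when $j\in\{0,n\}$ or $\overline{a_j}<\overline{a_{j+1}}$. From the iterated definition of hyperaddition in $\T$, one then concludes that $e_j=\{\overline{\sigma_j}\}$ in the uniqueness case, and $e_j=\{c\in\T\mid\overline{c}\geq\overline{\sigma_j}\}$ otherwise. Consequently, $p\in\prod(T+a_i)$ is equivalent to the condition that $\overline{c_{n-j}}\geq\overline{\sigma_j}$ for every $j=1,\dotsc,n$, with equality $c_{n-j}=\overline{\sigma_j}$ whenever the uniqueness condition for $j$ holds.

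For the function side, I would expand $\sum_{i=1}^n\min\{T,\overline{a_i}\}=\min_{j=0}^n(\overline{\sigma_j}+(n-j)\cdot T)$ as a tropical function, and then observe, after setting $k=n-j$, that the sequence of points $(k,\overline{\sigma_{n-k}})_{k=0}^n$ is itself convex in $k$, since its successive slopes $-\overline{a_{n-k+1}}$ are nondecreasing given that the $\overline{a_i}$'s are sorted. Hence this sequence coincides with its own lower convex hull. Since two monic polynomials over $\T$ of the same degree define the same tropical function if and only if the lower convex hulls of the point configurations $\{(k,\overline{c_k})\}_{k=0}^n$ associated to their coefficients agree, the equality $\overline{p}=\sum\min\{T,\overline{a_i}\}$ translates into the requirement that the lower convex hull of $\{(k,\overline{c_k})\}$ coincides with the convex sequence $(k,\overline{\sigma_{n-k}})$.

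The final step is to match the two conditions. The vertices of the convex sequence $(k,\overline{\sigma_{n-k}})$ are exactly those indices $k$ at which the slope strictly increases, namely the $k$ with $\overline{a_{n-k}}<\overline{a_{n-k+1}}$, together with the two endpoints $k=0$ and $k=n$. Substituting $j=n-k$, these are precisely the values of $j$ for which $\overline{\sigma_j}$ is attained uniquely. The hull-coincidence condition therefore forces $\overline{c_{n-j}}=\overline{\sigma_j}$ at uniqueness indices and $\overline{c_{n-j}}\geq\overline{\sigma_j}$ elsewhere, which matches exactly the hyperfield condition isolated in the second paragraph. The main obstacle will be the combinatorial bookkeeping around the uniqueness analysis of $e_j$ and its precise alignment with the vertex pattern of the Newton polygon (including correct handling of the $c_k=\infty$ case); once this alignment is established, the rest is a routine reformulation.
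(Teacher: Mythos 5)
Your proposal is correct, and its first half coincides with the paper's: the paper likewise sorts the $a_i$, sets $s_j=a_1\dotsb a_j$ (so $\overline{s_j}=\overline{a_1}+\dotsb+\overline{a_j}$), and observes that $p\in\prod(T+a_i)$ is equivalent to $\overline{c_{n-j}}\geq\overline{s_j}$ for all $j$, with equality forced exactly when the minimizing $j$-subset is unique, i.e.\ when $a_j<a_{j+1}$ (with the convention $a_{n+1}=\infty$, which subsumes your cases $j\in\{0,n\}$). The divergence is in the second half. The paper does not pass through Newton polygons in this proposition: it proves $\overline p(\overline b)\geq\sum\min\{\overline b,\overline{a_i}\}$ termwise from $\overline{c_{n-j}}\geq\overline{s_j}$, obtains the reverse inequality by locating $k$ with $a_k\leq b<a_{k+1}$ and using the forced equality $c_{n-k}=s_k$ at that uniqueness index, and for the converse implication evaluates $\overline p$ at infinitely many $\overline b$ in an interval of linearity to pin down $c_{n-k}=s_k$. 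Your route instead invokes the Legendre/Newton-polygon duality (``two monic polynomials of the same degree define the same tropical function iff their coefficient configurations have the same lower convex hull'') and matches the vertices of the hull with the uniqueness indices; this is cleaner and makes the combinatorial alignment transparent, but the nontrivial direction of that duality --- that the function determines the hull, not just conversely --- is exactly what the paper's ``infinitely many $\overline b$'' computation establishes by hand, so you should either cite it explicitly (it is the standard fact underlying Lemma~\ref{lemma: roots of a tropical polynomials as slopes of the Newton polygon}) or supply the short supporting-line argument. The remaining bookkeeping you flag (coefficients equal to $\infty$, and the endpoint conventions) is indeed routine and hides no obstruction.
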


\begin{proof}
 Let $p=\sum c_iT^i$ and assume that $a_1\leq\dotsb\leq a_n$. We define
 \[
  s_i \ = \ a_1\dotsb a_i,
 \]
 which can be thought of as the $i$-th elementary symmetric polynomial evaluated at $(a_1,\dotsc,a_n)$ (with respect to the tropical addition from $\overline\R$, not the hyperaddition of $\T$). Thus
 \[
  \sum_{i=1}^n \min\{\overline b,\, \overline a_i\} \ = \ \underset{i=0,\dotsc,n}\min\{\overline s_i+i\overline b\}.
 \]
 The relation $p\in\prod(T+a_i)$ means that $c_{n-i}\geq s_i$ for all $i=1,\dotsc,n$, with equality if the minimum occurs only once among the terms $a_{e_1}\dotsb a_{e_i}$ with $1\leq e_1<\dotsb< e_i\leq n$. This is the case if and only if $a_i<a_{i+1}$.
 
 We begin with the proof that $p\in\prod(T+a_i)$ implies $\overline p=\sum\min\{T,\overline a_i\}$. For $b\in\T$, we have
 \[
  \overline p(b) \ = \ \underset{i=0,\dotsc,n}\min\{\overline c_i+i\overline b\} \ \geq \ \underset{i=0,\dotsc,n}\min\{\overline s_i+i\overline b\} \ = \ \sum_{i=1}^n \min\{\overline b,\, \overline a_i\}.
 \]
 In order to verify the reverse inequality, we choose some $a_0\leq\min\{b,a_1\}$ and define $a_{n+1}=\infty$. Then $a_k\leq b< a_{k+1}$ for some $k\in\{0,\dotsc,n\}$. Since $a_k< a_{k+1}$, we have $c_{n-k}=s_k$, as noted before. Therefore
 \[
  \overline p(\overline b) \ = \ \underset{i=0,\dotsc,n}\min\{\overline c_i+i\overline b\} \ \leq \ \overline c_{n-k}+(n-k)\overline b \ = \ \overline s_k+(n-k)\overline b \ = \ \sum_{i=1}^n \min\{\overline b,\, \overline a_i\}.
 \]
 This concludes the proof that $\overline p=\sum\min\{T,\overline a_i\}$.
 
 We continue with the reverse implication and assume that $\overline p=\sum\min\{T,\overline a_i\}$. We need to show for $k=1,\dotsc,n$ that $c_{n-k}\geq s_k$, with equality if $a_k<a_{k+1}$.
 Choose $b\in\T$ such that $a_k\leq b\leq a_{k+1}$, where we set $a_{n+1}=\infty$ as before. Then
 \[
  \underset{i=0,\dotsc,n}\min\{\overline c_i+i\overline b\}  \ = \  \overline p(\overline b) \ = \ \sum_{i=1}^n \min\{\overline b,\, \overline a_i\} \ = \ \overline a_1+\dotsb+\overline a_k+\underbrace{\overline b+\dotsb+\overline b}_{n-k\text{ times}} \ = \ \overline s_k+(n-k)\overline b.
 \]
It follows, in particular, that $c_{n-k}\leq s_k$. If $a_k< a_{k+1}$, then $\overline p(\overline b)=\overline s_k+(n-k)\overline b$ for infinitely many $b$. This is only possible if $c_{n-k}=s_k$. 
% This completes the proof of the proposition.
\end{proof}

We are left with proving the first equality in part \eqref{fund2} of Theorem \ref{thm: fundamental theorem for the tropical hyperfield}. As a first step, we will prove the following fact. 
(To make sense of the case $n=1$, we define the empty product of polynomials over $\T$ as $\{0\}$.)

\begin{lemma}\label{lemma: polynomial division by linear factors for the tropical hyperfield}
 Let $p$ be a polynomial over $\T$ and let $a_1,\dotsc,a_n\in\T$ be such that $p\in \prod_{i=1}^n(T+a_i)$. If $p\in (T+a_n)q$ for a polynomial $q$ over $\T$, then $q\in\prod_{i=1}^{n-1}(T+a_i)$.
\end{lemma}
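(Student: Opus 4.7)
The plan is to reformulate the lemma in the language of tropical polynomial functions and then apply the uniqueness provided by Lemma~\ref{lemma: fundamental theorem of tropical algebra}. By Proposition~\ref{prop: characterization of functional equivalence over the tropical hyperfield}, the hypothesis $p\in\prod_{i=1}^n(T+a_i)$ translates into $\overline p=\min\{T,\overline a_n\}+\overline r$ as tropical functions, where $\overline r=\sum_{i=1}^{n-1}\min\{T,\overline a_i\}$, and the desired conclusion $q\in\prod_{i=1}^{n-1}(T+a_i)$ translates (again by the Proposition) into the equality $\overline q=\overline r$ of tropical functions. It therefore suffices to show that $p\in(T+a_n)q$ forces $\overline p = \min\{T,\overline a_n\}+\overline q$ as tropical functions, since then cancelling $\min\{T,\overline a_n\}$ yields $\overline q=\overline r$ (the degenerate case $a_n=0$, corresponding to $\overline a_n=\infty$, is handled separately by noting that $p\in Tq$ simply shifts coefficients).

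Write $q=\sum_{i=0}^{n-1}d_iT^i$ and form the polynomial $\tilde p=(T+a_n)\cdot q$ computed in the tropical semifield $\overline\R[T]$, so that $\tilde c_0=\overline a_n+\overline d_0$, $\tilde c_n=\overline d_{n-1}$, and $\tilde c_i=\min(\overline a_n+\overline d_i,\overline d_{i-1})$ for $1\leq i\leq n-1$. Separating the terms in $\min_i(\tilde c_i+i\overline b)$ according to whether they involve $\overline a_n$ yields the identity $\overline{\tilde p}(\overline b)=\min\{\overline b,\overline a_n\}+\overline q(\overline b)$ for every $\overline b\in\overline\R$. On the other hand, the hypersum relations $c_i\in a_nd_i\hyperplus d_{i-1}$ imply $\overline c_i\geq\tilde c_i$ for every $i$, with equality forced unless the ``tie'' condition $\overline a_n+\overline d_i=\overline d_{i-1}$ holds, and hence $\overline p\geq\overline{\tilde p}$ pointwise.

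The key step is the reverse inequality $\overline p\leq\overline{\tilde p}$. Fix $\overline b\neq\overline a_n$ and choose an index $j$ with $\overline q(\overline b)=\overline d_j+j\overline b$. If $\overline b<\overline a_n$, the minimality of $j$ gives $\overline d_j-\overline d_{j+1}\leq\overline b<\overline a_n$, so $\overline a_n+\overline d_{j+1}>\overline d_j$; this is \emph{not} the tie case, and the hypersum relation therefore pins down $\overline c_{j+1}=\overline d_j$, whence
\[
\overline c_{j+1}+(j+1)\overline b \ = \ \overline b+\overline d_j+j\overline b \ = \ \overline b+\overline q(\overline b) \ = \ \overline{\tilde p}(\overline b).
\]
Hence $\overline p(\overline b)\leq\overline{\tilde p}(\overline b)$. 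The case $\overline b>\overline a_n$ is symmetric, using $i=j$ in place of $j+1$; the boundary cases $j=n-1$ (for $\overline b<\overline a_n$) and $j=0$ (for $\overline b>\overline a_n$) are handled via the exact relations $\overline c_n=\overline d_{n-1}$ and $\overline c_0=\overline a_n+\overline d_0$. Continuity of tropical polynomial functions then extends $\overline p=\overline{\tilde p}$ from $\overline\R\setminus\{\overline a_n\}$ to all of $\overline\R$. The main obstacle is precisely this step, because the tie freedom in the hypersum relation allows $\overline c_i$ to strictly exceed $\tilde c_i$ at certain indices; one must therefore exhibit, at each $\overline b\neq\overline a_n$, a specific index where no tie occurs so that $\overline c_i=\tilde c_i$ there.
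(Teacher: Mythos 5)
Your proof is correct, and it takes a genuinely different route from the paper's. The paper argues by induction on $n$: it first invokes the existence half of Theorem~\ref{thm: fundamental theorem for the tropical hyperfield}\eqref{fund1} to factor $q$ as $q\in\prod(T+a_i')$, then multiplies out the hypersum containments to show that $a_1',\dotsc,a_{n-1}',a_n$ also factor $p$, and finally uses the uniqueness half of \eqref{fund1} to identify $\{a_i'\}$ with $\{a_1,\dotsc,a_{n-1}\}$ as multisets. You instead prove the sharper intermediate fact that the single relation $p\in(T+a_n)q$ already forces the functional identity $\overline p=\min\{T,\overline a_n\}+\overline q$, after which cancellation of the real number $\min\{\overline b,\overline a_n\}$ at each $\overline b\in\R$ and one application of Proposition~\ref{prop: characterization of functional equivalence over the tropical hyperfield} to $q$ finish the argument — no induction and no appeal to uniqueness of factorizations. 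The heart of your argument, exhibiting at each $\overline b\neq\overline a_n$ an index where the hypersum relation cannot be in the ``tie'' case (so that $\overline c_i=\tilde c_i$ there), is sound: if $j$ attains $\overline q(\overline b)$ then $\overline d_{j+1}+\overline b\geq\overline d_j$ gives $\overline a_n+\overline d_{j+1}>\overline d_j$ when $\overline b<\overline a_n$, and symmetrically for $\overline b>\overline a_n$; in fact any minimizing $j$ works, so the minimality/maximality choices are not even needed. Two cosmetic points: the expression $\overline d_j-\overline d_{j+1}$ should be read as the inequality $\overline d_{j+1}+\overline b\geq\overline d_j$ to cover $\overline d_{j+1}=\infty$, and the case $\overline a_n=\infty$ is already subsumed by the $\overline b<\overline a_n$ branch, so no separate treatment is required. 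What your approach buys is a self-contained, non-inductive proof resting only on Proposition~\ref{prop: characterization of functional equivalence over the tropical hyperfield}; what the paper's buys is a purely coefficient-level argument that never leaves the hyperfield formalism.
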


\begin{proof}
 Note that the hypotheses of the proposition imply that $p$ is monic of degree $n\geq 1$ and that $q$ is monic of degree $n-1$. We prove the result by induction on $n$. If $n=1$, then $p=(T+a_1)$ and $q=0$ is contained in the empty product.
 
 Let $n>1$. By part \eqref{fund1} of Theorem \ref{thm: fundamental theorem for the tropical hyperfield}, $q\in\prod_{i=1}^{n-1}(T+a_i')$ for some sequence $a_1',\dotsc, a_{n-1}'\in\T$. This means that 
 \[
  d_i \ \in \ \underset{1\leq e_1<\dotsb<e_{n-i-1}<n}\hypersum (a'_{e_1}+\dotsb+a'_{e_{n-i-1}})
 \]
 for all $i=0,\dotsc,n-2$. Thus $p\in (T+a_n)q$ implies that 
 \begin{align*}
  c_i \ \in \ d_{i-1}\hyperplus d_ia_n \ &\subset \ \underset{1\leq e_1<\dotsb<e_{n-i}<n}\hypersum (a'_{e_1}\dotsb a'_{e_{n-i}}) \hyperplus \underset{1\leq e_1<\dotsb<e_{n-i-1}<n}\hypersum (a'_{e_1}\dotsb a'_{e_{n-i-1}}a_n) \\
                                         &= \ \underset{1\leq e_1<\dotsb<e_{n-i}\leq n}\hypersum (a'_{e_1}\dotsb a'_{e_{n-i}})
 \end{align*}
 for $i=1,\dotsc,n-1$, where we set $a_n'=a_n$. Also $c_0=d_0a_n=\prod a_i'$, and thus $p\in \prod(T+a_i')$. By the uniqueness of $a_1,\dotsc,a_n$ such that $p\in \prod(T+a_i)$ (by part \eqref{fund1} of Theorem \ref{thm: fundamental theorem for the tropical hyperfield}), we conclude that there is a permutation $\sigma\in S_{n-1}$ such that $a_i'=a_{\sigma(i)}$ for $i=1,\dotsc,n-1$. Thus $q\in\prod_{i=1}^{n-1}(T+a_i)$, as claimed.
\end{proof}

In order to complete the proof of Theorem \ref{thm: fundamental theorem for the tropical hyperfield}, consider a monic polynomial $p=\sum c_iT^i$ of degree $n$ over $\T$ with $p\in\prod(T+a_i)$ and let $a\in \T$. Then $\infty\in p(a)$ if and only if the minimum appears twice among the terms $\overline c_i+i\cdot\overline a$ for $i=0,\dotsc,n$. This means that the function $\overline p:\R\to \R$ has a change of slope at $a$, which is the case if and only if $a\in\{a_1,\dotsc,a_n\}$.

We prove that $\mult_a(p)=\#\{i \, | \, a=a_i\}$ by induction on the latter quantity. If $\#\{i \, | \, a=a_i\}=0$, then $a\notin \{a_1,\dotsc,a_n\}$ and $\infty\notin p(a)$. Thus $\mult_a(p)=0$, as desired.

If $\#\{i \, | \, a=a_i\}>0$, then $a\in \{a_1,\dotsc,a_n\}$ and $\infty\in p(a)$. After relabelling the indices, we can assume that $a=a_n$. For every polynomial $q$ over $\T$ with $p\in (T+a_n)q$, Proposition \ref{lemma: polynomial division by linear factors for the tropical hyperfield} shows that $q\in\prod_{i=1}^{n-1}(T+a_i)$. Thus the inductive hypothesis applies to $q$ and yields
\[
 \mult_a(p) \ \geq \ \mult_a(q)+1 \ = \ \#\big\{ \, i\in\{1,\dotsc,n-1\} \, \big| \, a=a_i \, \big\} +1 \ = \ \#\big\{ \, i\in\{1,\dotsc,n\} \, \big| \, a=a_i \, \big\}.
\]
By definition, $\mult_a(p)=1+\max\{\mult_a(q) \, | \, p\in (T+a)q\}$. By Lemma \ref{lem: roots}, there is a polynomial $q_0$ such that $p\in (T+a)q_0$. Since $q$ was arbitrary, the first inequality in the displayed equation is an equality. This concludes the proof of Theorem \ref{thm: fundamental theorem for the tropical hyperfield}. \hfill\qed

% \begin{rem}
%  Given a root $a$ of a polynomial $p$, Lemma \ref{lem: roots} shows the existence of a polynomial $q$ such that $p\in (T+a)q$, but its proof is not constructive: it does not provide a way how to find such a $q$. In Appendix \ref{appendix: division algorithm}, we will explain a division algorithm that yields a $q$ with the desired property.
% \end{rem}

%%%%%%%%%%%%%%%%%%%%%%%%%%%%%%%%%%%%%%%%%%%%%%%%%%%%%%%%%%%%%%%%%%%%%%%%%%%%%%%%%%%%%%%%%%%%%%%%%%%%%%%%%%%%%%%%%%%%%%%%%%%%%%%%%%%%%%%%%%%%%%%%%%%%%%%%%%%%%%%%%%%%%
%%%%%%%%%%%%%%%%%%%%%%%%%%%%%%%%%%%%%%%%%%%%%%%%%%%%%%%%%%%%%%%%%%%%%%%%%%%%%%%%%%%%%%%%%%%%%%%%%%%%%%%%%%%%%%%%%%%%%%%%%%%%%%%%%%%%%%%%%%%%%%%%%%%%%%%%%%%%%%%%%%%%%

%\begin{comment}

\appendix

%%%%%%%%%%%%%%%%%%%%%%%%%%%%%%%%%%%%%%%%%%%%%%%%%%%%%%%%%%%%%%%%%%%%%%%%%%%%%%%%%%%%%%%%%%%%%%%%%%%%%%%%%%%%%%%%%%%%%%%%%%%%%%%%%%%%%%%%%%%%%%%%%%%%%%%%%%%%%%%%%%%%%
%%%%%%%%%%%%%%%%%%%%%%%%%%%%%%%%%%%%%%%%%%%%%%%%%%%%%%%%%%%%%%%%%%%%%%%%%%%%%%%%%%%%%%%%%%%%%%%%%%%%%%%%%%%%%%%%%%%%%%%%%%%%%%%%%%%%%%%%%%%%%%%%%%%%%%%%%%%%%%%%%%%%%

\section{Polynomial algebras over hyperfields}
\label{section: polynomial algebras over hyperfields}

Up to this point, we have considered polynomials over a hyperfield $F$ as formal expressions of the form $\sum c_iT^i$ with coefficients $c_i\in F$. In this appendix, we explain how to make sense of such expressions as elements of a ``polynomial algebra'' over $F$, and how the definitions of roots and their multiplicities take a more conventional form in such a formulation.

In fact, we will consider two candidates for the polynomial algebra over a hyperfield: as a ``additive-multiplicative hyperring'' with multi-valued multiplication and addition, or as an ordered blueprint.
We argue that the second of these alternatives is the more natural and less pathological one.

\subsection{Polynomial hyperrings}
\label{subsection: polynomial hyperrings}

Let $F$ be a hyperfield. The set $\Pol(F)=\{\sum c_iT^i \, | \, c_i\in F\}$ of all polynomials over $F$ can be naturally endowed with two hyperoperations $\hyperplus$ and $\hyperdot$, which are defined for polynomials $p=\sum c_iT^i$ and $q=\sum d_iT^i$ as
\begin{align*}
 p\hyperplus q \ &= \textstyle \ \big\{ \, \sum e_iT^i \, \big| \, e_i\in c_i\hyperplus d_i \, \big\}, \\
 p\hyperdot  q \ &= \textstyle \ \big\{ \, \sum e_iT^i \, \big| \, e_i\in \underset{k+l=i}\hypersum c_kd_l \, \big\}.
\end{align*}
These operations turn $\Pol(F)$ into an additive-multiplicative hyperring which has been considered in \cite{Davvaz-Koushky04}, \cite{Janic-Rasovi07}, and other publications.

Let $a\in F$, and let $p=\sum c_iT^i$ and $q=\sum d_iT^i$ be polynomials over $F$. Then $p\in (T-a)\hyperdot q$ if and only if $n=\deg p=\deg q+1$ and 
\[
c_0=-ad_0, \; \; c_i\in (-ad_i) \hyperplus d_{i-1} \; \text{for} \; i=1,\dots, n-1, \; \text{and} \; \; c_{n}=d_{n-1}.
% c_{n}=d_{n-1}, \qquad c_0=-ad_0 \qquad \text{and} \qquad c_i\in (-ad_i) \hyperplus d_{i-1} \qquad \text{for} \qquad i=1,\dots, n-1.
\]
This means that the relation $p\in (T-a)q$, as introduced in section \ref{section: statement of the main results}, is equivalent to the relation $p\in (T-a)\hyperdot q$ stemming from the hypermultiplication of polynomials over $F$.

Similar to the case of the hypersum of a hyperfield, we define $n$-fold products of polynomials over $F$ by the recursive formula
\[
 \underset{i=1}{\stackrel{n}\hyperprod} p_i \ = \ \bigcup_{q\in\hyperdot_{i=1}^{n-1} p_i} q \hyperdot p_n.
\]

%Similar to the case of the hypersum of a hyperfield, we extend the hyperoperations to subsets $A$ and $B$ of $\Pol(F)$. More precisely, we define the hyperproduct of $A$ and $B$ as
%\[
% A\hyperplus B \ = \ \bigcup_{p\in A,\, q\in B} p\hyperplus q \qquad \text{and} \qquad 
% A\hyperdot B \ = \ \bigcup_{p\in A,\, q\in B} p\hyperdot q.
%\]
%We define $A\hyperdot p=A\hyperdot\{p\}$ and, recursively,
%\[
% \stackrel{n}{\underset{i=1}{\hyperprod}} p_i \quad = \quad \bigg( \, \stackrel{n-1}{\underset{i=1}{\hyperprod}}p_i \, \bigg)\hyperdot p_n.
%\]

In the case of the tropical hyperfield $\T$, the relation $p\in\prod(T+a_i)$ from section \ref{section: multiplicities of tropical roots} is equivalent to $p\in\hyperprod_{i=1}^n(T+a_i)$. Indeed, by multiplying out all linear terms, we find that $p\in\hyperprod_{i=1}^n(T+a_i)$ is equivalent to $p=\sum c_iT^i$ being monic of degree $n$ such that 
\[
 c_{n-i} \ \in \ \underset{1\leq e_1<\dotsb <e_i\leq n}\hypersum a_{e_1}\dotsb a_{e_i}
\]
for all $i=1,\dotsc,n$. 

In spite of these appealing interpretations of the relations $p\in(T+a)q$ and $p\in\prod(T+a_i)$, we view the (additive-multiplicative) polynomial hyperring $\Pol(F)$ as an object of limited utility due to the following two deficiencies.
% digressions with the properties of the polynomial ring over a field.

\subsection{Deficiency \#1: polynomial hyperrings are not associative}
\label{subsection: polynomial hyperings are not associative}

The hypermultiplication of a polynomial hyperring fails to be associative in general. This is, for instance, the case for the polynomial algebra $\Pol(\S)$ over the sign hyperfield, as the following example (due to Ziqi Liu, cf.~\cite{Liu19}) shows:
% \todo{Cite \url{https://arxiv.org/abs/1911.09263}}

% \todo{Once Liu's paper is complete, we can reference it (and for my part, we can wait with our arxiv update a bit more). In case he completes his proof of the associativy of $\Pol(\T)$, we should also change the corresponding remark below.}

While
\begin{align*}
 \Big((T-1)\hyperdot(T-1)\Big) \hyperdot (T+1) \ &= \ \big\{ \, T^2-T+1 \, \big\} \hyperdot (T+1) \\
                                                 &= \ \big\{ \, T^3+aT^2+bT+1 \, \big| \, a,b\in\S \,\big\},
\end{align*}
we have 
\begin{align*}
 (T-1)\hyperdot\Big((T-1) \hyperdot (T+1)\Big) \ &= \ (T-1)\hyperdot\big\{ \, T^2+aT-1 \,\big| \, a\in\S\big\} \\
                                                 &= \ \big\{ \, T^3+aT^2+bT+1 \, \big| \, a=-1\text{ or }b=-1\,\big\}.
\end{align*}
This means, in particular, that $n$-fold products $\hyperprod_{i=1}^n p_i$ of linear polynomials $p_i\in\Pol(\S)$ depend on the order of the $p_i$.

\begin{rem}
Note that this example also shows that we cannot define the multiplicities of roots in a na{\"i}ve way in terms of factorizations into linear factors: $p=T^3+T^2+T+1$ is an element of $\big((T-1)\hyperdot(T-1)\big) \hyperdot (T+1)$, but $p(1)$ does not contain $0$.
\end{rem}

\begin{rem}
Liu also shows in \cite{Liu19} that hypermultiplication in $\Pol(\T)$ is non-associative. Note, however, that Theorem \ref{thm: fundamental theorem for the tropical hyperfield} implies that the hyperproduct of \emph{linear} polynomials over $\T$ {\em is} associative, and thus $\hyperprod_{i=1}^n(T+a_i)$ is independent of the order of the factors. 
\end{rem}

\begin{rem}
 We could overcome Deficiency \#1 by extending the hyperproduct of polynomials to certain sets of polynomials in the following way. For a finite sequence $C_0,\dotsc,C_n\subset F$ of subsets of $F$, we denote by $\sum C_iT^i$ the set of polynomials $p=\sum c_iT^i$ with coefficients $c_i\in C_i$. We define
 \[\textstyle
  (\sum C_iT^i) \hypertimes (\sum D_iT^i) \ = \ \sum \Big( \hypersum_{k+l=i} C_kD_l \Big)T^i,
 \]
 which recovers the hyperproduct $(\sum c_iT^i)\hyperdot(\sum d_iT^i)=(\sum C_iT^i)\hypertimes(\sum D_iT^i)$ in the case of singletons $C_i=\{c_i\}$ and $D_i=\{d_i\}$. With these conventions, $\hypertimes$ is associative, and in particular
 \[
  \hypercross_{i=1}^n p_i \quad = \quad \Big\{ \, {\textstyle \sum} d_iT^i \, \Big| \, d_i\in\underset{j_1+\dotsb+j_n=i}\hypersum \big( \prod_{k=1}^n c_{k,j_k}\big) \,\Big\}.
 \]
 for $p_i=\sum c_{i,j}T^j$. 
 
 We will not pursue this line of thought any further; note, however, that
 % However, we will not pursue this route, which diverges even further from the theory of common hyperrings than the hyperproduct $\hyperdot$ due to the non-standard extension of $\hypertimes$ to subsets of elements. 
% Note that 
 $\hypertimes$ appears implicitly in our proposed solution using ordered blueprints. Namely, $q\in \hypercross_{i=1}^n p_i$ if and only if $q\leq \prod_{i=1}^n p_i$ in the associated ordered blueprint; cf.\ section \ref{subsection: polynomial hyperrings revisited}.
%
% One could overcome the Deficiency \#1 by the following modified definition of the $n$-fold hyperproduct of polynomial $p_i=\sum c_{k,i}T^i \in\Pol(F)$ (for $i=1,\dotsc,n$):
% \[
%  \hypercross_{i=1}^n p_i \quad = \quad \Big\{ \, {\textstyle \sum} d_iT^i \, \Big| \, d_i\in\underset{j_1+\dotsb+j_n=i}\hypersum \big( \stackrel{n}{\underset{k=1}{\hyperprod}} c_{k,j_k}\big) \,\Big\}.
% \]
% That this modified $n$-fold product differs from our first definition can be seen as additional evidence for the ill-behaviour of the hyperproduct of polynomials over hyperfields. 
 
% Note that the modified $n$-fold product appears implicitly in our proposed solution using ordered blueprints: $q\in \hypercross_{i=1}^n p_i$ if and only if $q\leq \prod_{i=1}^n p_i$ in the associated ordered blueprint; cf.\ section \ref{subsection: polynomial hyperrings revisited}.
\end{rem}

\begin{comment}
The hypermultiplication of a polynomial hyperring fails to be associative in general. We show, for example, that this is the case for the polynomial algebra $\Pol(\P)$ over the phase hyperfield $\P$. 

Let $a=e^{2\pi i/3}$ be a third root of unity in $\P$ and $b=a^{-1}$ its multiplicative inverse. 
Since both the hyperaddition and multiplication of $\P$ are commutative, the hyperaddition and hypermultiplication of $\Pol(\P)$ are commutative as well. If $\Pol(\P)$ were associative, we would have an equality between 
\[
 \Big((T+a)\hyperdot(T+b)\Big) \hyperdot \Big((T+a)\hyperdot(T+b)\Big)
 \ \text{and} \ 
 \Big((T+a)\hyperdot(T+a)\Big) \hyperdot \Big((T+b)\hyperdot(T+b)\Big).
\]
We will see that the quadratic terms of these products disagree. The product on the left hand side yields
\[
 \Big(T^2+[b,a]T+ 1\Big)\hyperdot\Big(T^2+[b,a]T+ 1\Big) \ = \ \dotsb + \big(1\hyperplus 1\hyperplus [b^2,a^2] \big) T^2 +\dotsb,
\]
where we use the identities $ab=1$, $a\hyperplus b=[b,a]$ and $[b,a]\hyperdot [b,a]=[b^2,a^2]$ (we always write intervals in the direction of increasing angles). The product on the right hand side yields
\[
 \Big(T^2+(a\hyperplus a)T+ a^2\Big)\hyperdot\Big(T^2+(b\hyperplus b)T+ b^2\Big) \ = \ \dotsb + \big(a^2\hyperplus b^2\hyperplus 1 \big) T^2 +\dotsb,
\]
where we use the identities $a\hyperplus a=\{a\}$ and $b\hyperplus b=\{b\}$. Comparing the coefficients of the respective quadratic terms yields
\[
 1\hyperplus 1\hyperplus [b^2,a^2] \ = \ [b^2,a^2] \ \neq \ \P \ = \ a^2\hyperplus b^2\hyperplus 1,
\]
which shows that $\P$ is not associative.

\begin{rem}
 The failure of $\hyperdot$ to be associative stems from the failure of $\P$ to be doubly distributive. (A hyperfield $F$ is called \emph{doubly distributive} if $(a\hyperplus b)(c\hyperplus d) = ac\hyperplus ad\hyperplus bc\hyperplus bd$ for all $a,b,c,d\in F$; examples of doubly distributive hyperfields include fields, $\S$, $\T$, and $\K$.) It is not hard to see that if $F$ is doubly distributive then the hyperoperation $\hyperdot$ on $\Pol(F)$ is associative. 
\end{rem}
%\end{comment}

\subsection{Deficiency \#2: polynomial hyperrings are not free}
\label{subsection: polynomial hyperrings are not free}

Polynomial hyperrings fail to satisfy the universal property of a free algebra.  In fact, it appears to be the case that neither the category of hyperrings nor a suitable category of (non-associative) additive-multiplicative hyperrings possess free algebras in general. Here we assume that a morphism of additive-multiplicative hyperrings is a map $f:R_1\to R_2$ that preserves $0$ and $1$ and satisfies $f(a\hyperplus b)\subset f(a)\hyperplus f(b)$ and $f(a\hyperdot b)\subset f(a)\hyperdot f(b)$. 

For instance, we can extend the identity map $\K\to\K$ of the Krasner hyperfield to different morphisms $\K[T]\to\K$ that map $T$ to $1$. One example is the morphism $f_1:\K[T]\to\K$ that maps every nonzero polynomial $p$ to $f(p)=1$. Another example is the morphism $f_0:\K[T]\to\K$ for which $f_0(p)=1$ if and only if $p$ is a monomial, i.e.\ $p=T^n$ for some $n\geq0$.

%Such a morphism sends monomials $T^n$ necessarily to $1$. 

% (One could also imagine replacing the inclusion relations $\subset$ by equalities of sets, but the following example shows in either case that polynomial hyperring over $\S$ is not a free algebra.)
% by exhibiting a homomorphism of hyperfields $F_1 \to F_2$ and an $a \in F_1$ for which there is no morphism $f:F_1[T]\to F_2$ extending the original homomorphism and sending $T$ to $a$.
% linearly (in a unique way) with respect to the multi-valued addition. To wit, a morphism $f:F_1\to F_2$ of additive-multiplicative hyperrings preserves $0$ and $1$ and satisfies $f(a\hyperplus b)\subset f(a)\hyperplus f(b)$ and $f(a\hyperdot b)\subset f(a)\hyperdot f(b)$. In certain cases, the multi-valued multiplication prevents any such linear extension, as one sees in the following example. 

\begin{comment}
\begin{ex}
 Given the identity map $\S\to\S$ of the sign hyperfield and the map $T\mapsto1$, we would like to extend these maps to a morphism $f:\S[T]\to\S$. The image of $(T-1)\hyperplus(-T+1)=\{c_1T+c_0 \, | \, c_0,c_i\in\S\}$ under $f$ is $\S$ and must be contained in $f(T-1)\hyperplus f(-T+1)$. This is only possible if $f(T-1)=-f(-T+1)\in\{\pm1\}$ and thus $f(T-1)\hyperdot f(-T+1)=\{-1\}$. But the image of $(T-1)\hyperdot(-T+1)=\{-T^2+T-1\}$ under $f$ is $\S$ and not a subset of $\{-1\}$. This shows that there is no morphism $\S[T]\to\S$ that extends the identity map $\S\to\S$ and sends $T$ to $1$.
\end{ex}
%\end{comment}

\subsection{Towards free algebras}
\label{subsection: towards free algebras}

One way to incorporate free (and associative) algebras over hyperfields might be to develop a theory of ``partial hyperrings'', as considered in \cite{Baker-Bowler17}, which allows for such objects. In this appendix, however, we will use the more general and already developed theory of ordered blueprints to produce free algebras which satisfy the desired universal property. We remark that one could most likely also develop a similar theory based on Rowen's notion of {\em systems} (cf.\ \cite{Rowen19}), which is similar to that of an ordered blueprint.

%  and recover the appealing formalism that we described for polynomial hyperrings.

In layman's terms, the passage from hyperfields to ordered blueprints consists essentially in an exchange of symbols: the relations $c\in a\hyperplus b$ in a hyperfield $F$ get replaced by the relations $c\leq a+b$ in the associated ordered blueprint. Under the hood, the symbol $\leq$ refers to a partial order that is defined on the group semiring $B^+=\N[F^\times]$. 

We now outline the definition of ordered blueprints and indicate how they allow for free algebras over hyperfields; for more details, we refer the reader to \cite{Baker-Lorscheid18} and \cite{Lorscheid18}.

\subsection{Ordered blueprints}
\label{subsection: ordered blueprints}

An \emph{ordered semiring} is a commutative (and associative) semiring $R$ with $0$ and $1$ together with a partial order $\leq$ that is \emph{additive and multiplicative}, i.e.\ $a\leq b$ implies $a+c\leq b+c$ and $ac\leq bc$ for all $a,b,c\in R$. Given a set $S=\{a_i\leq b_i\}$ of relations on $R$, we say that $S$ \emph{generates the partial order on $R$} if $\leq$ is the smallest additive and multiplicative partial order of $R$ that contains $S$.

An \emph{ordered blueprint} is an ordered semiring $B^+$ together with a multiplicative subset $B^\bullet$ of $B^+$ that contains $0$ and $1$ and that generates $B^+$ as a semiring. We write $B$ for an ordered blueprint and refer to its \emph{ambient semiring} by $B^+$ and to its \emph{underlying monoid} by $B^\bullet$. %We write $a\in B$ for $a\in B^\bullet$ and $r=\sum a_i$ for an element of $B^+$ where we assume that the $a_i$ belong to $B^\bullet$. 
A \emph{morphism $f:B_1\to B_2$ of ordered blueprints} is an order-preserving homomorphism $f:B_1^+\to B_2^+$ of semirings such that $f(B_1^\bullet)\subset B_2^\bullet$.

\begin{ex}\label{ex: tropical semifield}
 The tropical semifield $\overline\R$ can be considered as the ordered blueprint $B$ with $B^\bullet=B^+=\overline\R$ whose partial order satisfies $a\leq b$ if and only if $a+b=b$. 
 
 For the purpose of this appendix, we invite the reader to think of $\overline\R$ as the 
 % \textit{Berkovich model} or the 
 \textit{max-times-algebra} $\R_{\geq0}$, in contrast to the \emph{min-plus-algebra} $\R\cup\{\infty\}$ used in the main part of this paper. The negative logarithm $-\log:\R_{\geq0}\to\R\cup\{\infty\}$ defines an isomorphism of semirings between these two models for $\overline\R$. Note that $\leq$ agrees with the natural order on $\R_{\geq0}$ and with the \textit{reversed} natural order on $\R\cup\{\infty\}$.
\end{ex}

\subsection{Hyperfields as ordered blueprints}
\label{subsection: hyperfields as ordered blueprints}

The incarnation of a hyperfield $F$ as an ordered blueprint $B$ is as follows. Its ambient semiring is the group semiring $B^+=\N[F^\times]$, its underlying (multiplicative) monoid is $B^\bullet=F$, and its partial order is generated by all relations of the form $c\leq a+b$ whenever $c\in a\hyperplus b$ in $F$. We illustrate this in more detail for the main examples of hyperfields which appear in this paper.

\subsubsection{Fields}\label{subsubsection: fields as ordered blueprints}
Given a field $K$, the associated hyperaddition is defined as $a\hyperplus b=\{a+b\}$. This yields the ordered blueprint $B$ with ambient semiring $B^+=\N[K^\times]$, underlying monoid $B^\bullet=K$, and partial order $\leq$ that is generated by 
\[
 c \leq a+b \qquad \text{whenever} \qquad c = a+b \qquad \text{in }K.
\]

\subsubsection{The tropical hyperfield}\label{subsubsection: the tropical hyperfield as an ordered blueprint}
As with the tropical semifield $\overline\R$, we adopt the multiplicative notation from Example \ref{ex: tropical semifield}, i.e., we identify the elements of the tropical hyperfield with $\R_{\geq0}$ and, by abuse of notation,  use the letter $\T$ for the associated ordered blueprint, which can be described explicitly as follows. The ambient semiring of $\T$ is the group semiring $\T^+=\N[\R_{>0}]$ generated by the multiplicative group of positive real numbers, the underlying monoid is $\T^\bullet=\R_{\geq0}$, and the partial order is generated by the relations 
\[
 c \leq a+b \qquad \text{whenever} \qquad c = \max\{a,b\} \qquad \text{or} \qquad c \leq a=b \qquad \text{in }\R_{\geq0}.
\]
Note that the semiring $\T^+$ is \emph{not} idempotent, in contrast to the tropical semifield $\overline\R$. Rather, it is a subsemiring of the group ring $\Z[\R_{>0}]$. The connection to $\overline\R$ is given by the identity map $\T^\bullet\to\overline\R^\bullet$ between the respective underlying monoids, which extends linearly to an order-preserving surjection $f:\T^+=\N[\R_{>0}]\to \R_{\geq0}=\overline\R^+$ of semirings, i.e.,\ $f$ is a morphism of ordered blueprints.

\subsubsection{The sign hyperfield}\label{subsubsection: the sign hyperfield as an ordered blueprint}
As an ordered blueprint, the \emph{sign hyperfield} $\S$ consists of the ambient semiring $\S^+=\N[\{1,-1\}]$, the underlying monoid $\S^\bullet=\{0,1,-1\}$, and the partial order generated by the relations
\[
 1 \leq 1+1, \qquad 1\leq 1-1, \qquad \text{and} \qquad 0\leq 1-1.
\]
Note that $0$ and $1-1$ are distinct elements in $\S^+$.

\subsection{Free algebras}
\label{subsection: free algebras}

Let $B$ be an ordered blueprint with ambient semiring $B^+$, underlying monoid $B^\bullet$, and partial order $\leq$. The \emph{free algebra $B[T]$ over $B$} consists of the ambient semiring 
\[
 B[T]^+ \ = \ \big\{ \, \sum_{i=0}^n r_iT^i \, \big| \, r_i\in B^+ \, \big\},
\]
with respect to the usual addition and multiplication rules for polynomials, the underlying monoid
\[
 B[T]^\bullet \ = \ \{ \, aT^i \, | \, a\in B^\bullet \, \}
\]
of monomials in $B^+[T]$ with coefficients in $B^\bullet$, and the partial order generated by the relations
\[\textstyle
 rT^n \ \leq \ sT^n \qquad \text{whenever} \qquad r \ \leq \ s
\]
for $r,s\in B^+$. The universal property for $B[T]$ is as follows, cf.\ \cite[Lemma 5.5.2]{Lorscheid18}. 

\begin{lemma}
 For every morphism of ordered blueprints $f:B^+\to C^+$ and every element $a\in C$, there is a unique morphism of ordered blueprints $g:B[T]\to C$ such that $g(r)=f(r)$ for $r\in B^+$ and $g(T)=a$.
\end{lemma}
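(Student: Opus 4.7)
The proposal is to define $g$ directly on the ambient semiring $B[T]^+$ by the evaluation formula $g\bigl(\sum_{i=0}^n r_i T^i\bigr) = \sum_{i=0}^n f(r_i)\,a^i$, and then verify in turn that it is (i) a semiring homomorphism, (ii) compatible with the underlying monoids, and (iii) order-preserving. Uniqueness is forced: any morphism $g$ of ordered blueprints with $g|_{B^+} = f$ and $g(T) = a$ must send $\sum r_i T^i$ to $\sum f(r_i) a^i$ by additivity and multiplicativity, so the only content of the lemma is the existence of such a morphism satisfying the axioms.

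First I would verify that the prescribed formula gives a well-defined homomorphism of semirings $g^+: B[T]^+ \to C^+$. This is routine: $B[T]^+$ is built as the ordinary polynomial semiring over $B^+$, and the map $\sum r_i T^i \mapsto \sum f(r_i) a^i$ is exactly the evaluation homomorphism, whose compatibility with polynomial addition and multiplication is the standard calculation (using that $f$ is a semiring homomorphism and that $C^+$ is commutative and associative). Next I would check that $g$ sends the distinguished monoid $B[T]^\bullet$ into $C^\bullet$: an element of $B[T]^\bullet$ has the form $bT^i$ with $b \in B^\bullet$, and its image $f(b)\,a^i$ lies in $C^\bullet$ because $f(B^\bullet)\subset C^\bullet$, because $a\in C^\bullet$, and because $C^\bullet$ is closed under multiplication.

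The step that requires a small argument is preservation of the partial order. Define $P = \{(x,y)\in B[T]^+\times B[T]^+ \,|\, g^+(x)\leq g^+(y)\text{ in }C^+\}$. Since $g^+$ is a semiring homomorphism and $\leq$ on $C^+$ is additive and multiplicative, $P$ is itself an additive and multiplicative preorder on $B[T]^+$. By the definition of the partial order on $B[T]$, it suffices to show that $P$ contains every generating relation $rT^n \leq sT^n$ with $r\leq s$ in $B^+$. But $g^+(rT^n) = f(r)a^n$ and $g^+(sT^n)=f(s)a^n$, and since $f$ is order-preserving we have $f(r)\leq f(s)$; multiplying by $a^n$ (which is allowed because $\leq$ on $C^+$ is multiplicative) gives $f(r)a^n \leq f(s)a^n$, as desired. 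Hence the order on $B[T]^+$ is contained in $P$, i.e., $g$ is order-preserving.

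The main potential obstacle is the verification in the previous paragraph, because it relies on using the correct universal property of the generating relations: one has to note that the partial order on $B[T]^+$ is the \emph{smallest} additive-multiplicative partial order containing the stated generators, so pulling back any additive-multiplicative partial order along a semiring homomorphism yields a containment of orders. Once this point is correctly handled, the remaining checks (well-definedness, multiplicativity, and compatibility with the underlying monoid) are formal, and uniqueness follows immediately from the fact that $T$ and the elements of $B^+$ generate $B[T]^+$ as a semiring.
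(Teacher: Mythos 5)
Your proof is correct. Note that the paper itself gives no proof of this lemma---it only cites \cite[Lemma 5.5.2]{Lorscheid18}---so there is no in-text argument to compare against; your evaluation-homomorphism construction is the standard proof of this universal property and supplies exactly what the citation covers. The one step with genuine content is the one you correctly isolate: order-preservation need only be checked on the generating relations $rT^n\leq sT^n$, because the pullback of the additive and multiplicative partial order on $C^+$ along the semiring homomorphism $g^+$ is itself an additive and multiplicative (pre)order on $B[T]^+$, and the order on $B[T]^+$ is by definition the smallest such relation containing those generators. Two minor points worth making explicit: the hypothesis ``$a\in C$'' must be read as $a\in C^\bullet$ (otherwise $g(T)=a$ would be incompatible with the requirement $g(B[T]^\bullet)\subset C^\bullet$, since $T\in B[T]^\bullet$), which your monoid-compatibility check uses implicitly; and, as you say, uniqueness needs only that $B^+$ together with $T$ generates $B[T]^+$ as a semiring, so the morphism is determined on all of $B[T]^+$ by additivity and multiplicativity.
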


\begin{ex}
 A typical element of the free algebra $\T[T]$ over the tropical hyperfield is of the form $\sum r_iT^i$ where $r_i\in\N[\T^\times]$ is a formal sum $r_i=\sum a_k$ of tropical numbers $a_k\in\T^\times$. For example, we have
 \[
  T^2+T+1 \ \leq \ T^2+T+T+1 \ = \ (T+1)^2
 \]
 since $T\leq T+T$, but equality does not hold in $\T[T]^+$.

 A typical element of $\S[T]$ is of the form $\sum r_iT^i$ where $r_i\in\N[\{1,-1\}]$ is a formal sum of the form $r_i=1+\dotsb+1-1-\dotsb-1$. For example, we have
 \[
  T^2-1 \ \leq \ T^2+T-T-1 \ = \ (T+1)(T-1)
 \]
 since $0\leq T-T$, but equality does not hold in $\S[T]^+$.
\end{ex}

\subsection{Polynomial hyperrings, revisited}
\label{subsection: polynomial hyperrings revisited}

Let $F$ be a hyperfield and $B$ the associated ordered blueprint. Then every polynomial $\sum c_iT^i$ over $F$ is tautologically an element of the semiring $B[T]^+=\N[F^\times]$. This identifies $\Pol(F)$ with a subset of $B[T]^+$, which can be recovered from $B[T]$ as follows.

Let $B$ be an ordered blueprint. A \emph{polynomial over $B$} is an element of $B[T]^+$ of the form $p=\sum c_iT^i$ with $c_i\in B^\bullet$. We denote by $\Pol(B)$ the subset of polynomials in $B[T]^+$. 

If $B$ is the ordered blueprint associated with a hyperfield $F$, then $\Pol(F)=\Pol(B)$ as subsets of $B[T]^+$. Moreover, we obtain the following reinterpretation of the hyperaddition and hypermultiplication of polynomials over $F$:
\begin{align*}
 p_1\hyperplus p_2 \ &= \ \big\{ \, q\in \Pol(B) \, \big| \, q\leq p_1+p_2 \, \big\}, \\
 p_1\hyperdot p_2 \ &=  \ \big\{ \, q\in \Pol(B) \, \big| \, q\leq p_1\cdot p_2 \, \big\},
\end{align*}
where $p_1+p_2$ and $p_1\cdot p_2$ are, respectively, the sum and product of $p_1$ and $p_2$ as elements of $B[T]^+$. In other words, for $p_1,p_2,q\in\Pol(F)=\Pol(B)$ we have $q\in p_1\hyperplus p_2$ if and only if $q\leq p_1+p_2$ and $q\in p_1\hyperdot p_2$ if and only if $q\leq p_1\cdot p_2$.

\subsection{Roots of polynomials over ordered blueprints}
\label{subsection: roots of polynomials over ordered blueprints}

To close the circle of ideas, we reformulate the notions of roots and their multiplicities in our newly developed formalism and then extend these notions to a more general class of ordered blueprints than hyperfields. For this purpose, we introduce the notion of a pasture, which is an algebraic structure closely connected to the `foundation' of a matroid, cf.\ \cite{Baker-Lorscheid18}. There are several equivalent definitions of pastures. In this text, we realize them as a particular type of ordered blueprints.

We begin with some preliminary notions. We denote by $B^\times$ the group of multiplicatively invertible elements of $B$. An \emph{ordered blue field} is a nonzero ordered blueprint $B$ such that $B=B^\times\cup\{0\}$. An ordered blueprint $B$ is \emph{reversible} if it contains an element $\epsilon$ with $\epsilon^2=1$ such that every relation $a\leq b+r$ where $a,b\in B^\bullet$ and $r\in B^+$ implies $\epsilon b\leq \epsilon a+r$. As shown in \cite[Lemma 5.6.34]{Lorscheid18}, $\epsilon$ is uniquely determined by this property and for every element $a\in B^\bullet$ there is a unique element $b\in B^\bullet$ (namely $b=\epsilon a$) such that $0\leq a+b$. 

\begin{df}
 A \emph{pasture} is a reversible ordered blue field $B$ whose partial order is generated by relations of the form $c\leq a+b$ with $a,b,c\in B$ and such that the natural map $\N[B^\times]\to B^+$ is bijective.
\end{df}

Note that the ordered blueprint $B$ associated to a hyperfield $F$ is a pasture. Clearly, $B$ is an ordered blue field. The reversibility axiom \ref{HG3} for $F$ implies that $B$ is reversible. The last property follows from the fact that the partial order $\leq$ is generated by the relations $c\leq a+b$ for which $c\in a+b$ in $F$.

We extend the notions of roots and their multiplicities to polynomials from hyperfields to pastures.

\begin{df}
 Let $B$ be a pasture, let $a\in B^\bullet$, and let $p=\sum c_iT^i$ be a polynomial over $B$. Let $p(a)$ denote the element $\sum c_ia^i$ of $B^+$. Then $a$ is a \emph{root of $p$} if $0\leq p(a)$.
 
 If $a$ is not a root of $p$, we say that the \emph{multiplicity $\mult_a(p)$ of $a$} is $0$. If $a$ is a root of $p$, we define
 % then the \emph{multiplicity of $a$} is
 \[
  \mult_a(p) \ = \ 1+\max \big\{ \, \mult_a(q) \, \big| \, p\leq (T+\epsilon a)q \, \big\}.
 \]
\end{df}

%This definition recovers the notion of roots and their multiplicities from Definitions \ref{def: roots} and \ref{df:multiplicity} in the case of an ordered blueprint associated with a hyperfield $F$. 

Lemma \ref{lem: roots} generalizes to pastures $B$, with the same proof. Namely, $a\in B^\bullet$ is the root of a polynomial $p\in\Pol(B)$ if and only if there is a $q\in\Pol(B)$ such that $p\leq(T+\epsilon a)q$. 

Proposition~\ref{propA} also generalizes to pastures, with the same proof. Let $B$ be the ordered blueprint associated with a field $K$ (cf.\ section \ref{subsubsection: fields as ordered blueprints}) and $f:B\to C$ a morphism to a pasture $C$. Let $p=\sum c_iT^i\in\Pol(B)$ and denote by $\overline p=\sum f(c_i)T^i$ the image of $p$ in $\Pol(C)$.  Then for all $b\in C^\bullet$ we have
\[
 \mult_b(\overline p) \ \geq \ \sum_{\substack{a\in B^\bullet\text{ with}\\ f(a)=b}} \mult_a(p).
\]

\begin{comment}
To close the circle of ideas, we reformulate the notions of roots and their multiplicities in our newly developed formalism and then extend these notions to a more general class of ordered blueprints than hyperfields. The proof of Lemma \ref{lem: roots} makes clear the significance of the reversibility axiom \ref{HG3} for a theory of roots and their multiplicities; its analogue for ordered blueprints is as follows. 

An ordered blueprint $B$ is \emph{reversible} if it contains an element $\epsilon$ with $\epsilon^2=1$ such that every relation $a\leq b+r$ where $a,b\in B^\bullet$ and $r\in B^+$ implies $\epsilon b\leq \epsilon a+r$. As shown in \cite[Lemma 5.6.34]{Lorscheid18}, $\epsilon$ is uniquely determined by this property and for every element $a\in B^\bullet$ there is a unique element $b\in B^\bullet$ (namely $b=\epsilon a$) such that $0\leq a+b$. (The latter property means that reversible ordered blueprints are {\em pasteurized} in the sense of \cite{Baker-Lorscheid18}.) Note that the reversibility axiom \ref{HG3} implies that the ordered blueprint associated to a hyperfield is reversible.

\begin{df}
 Let $B$ be a reversible ordered blueprint, let $a\in B^\bullet$, and let $p=\sum c_iT^i$ be a polynomial over $B$. Let $p(a)$ denote the element $\sum c_ia^i$ of $B^+$. Then $a$ is a \emph{root of $p$} if $0\leq p(a)$.
 
 If $a$ is not a root of $p$, we say that the \emph{multiplicity $\mult_a(p)$ of $a$} is $0$. If $a$ is a root of $p$, we define
 % then the \emph{multiplicity of $a$} is
 \[
  \mult_a(p) \ = \ 1+\max \big\{ \, \mult_a(q) \, \big| \, p\leq (T+\epsilon a)q \, \big\}.
 \]
\end{df}

This definition recovers the notion of roots and their multiplicities from Definitions \ref{def: roots} and \ref{df:multiplicity} in the case of an ordered blueprint associated with a hyperfield $F$. If every nonzero element of $B$ is multiplicatively invertible and $0\neq 1$, then the proof of Lemma \ref{lem: roots} applies to show that $a\in B^\bullet$ is the root of a polynomial $p\in\Pol(B)$ if and only if there is a $q\in\Pol(B)$ such that $p\leq(T+\epsilon a)q$. 

Proposition~\ref{propA} also generalizes to reversible ordered blueprints, with the same proof. Let $B$ be the ordered blueprint associated with a field $K$ (cf.\ section \ref{subsubsection: fields as ordered blueprints}) and $f:B\to C$ a morphism to a reversible ordered blueprint $C$. Let $p=\sum c_iT^i\in\Pol(B)$ and denote by $\overline p=\sum f(c_i)T^i$ the image of $p$ in $\Pol(C)$.  Then for all $b\in C^\bullet$ we have
\[
 \mult_b(\overline p) \ \geq \ \sum_{\substack{a\in B^\bullet\text{ with}\\ f(a)=b}} \mult_a(p).
\]
%\end{comment}

%%%%%%%%%%%%%%%%%%%%%%%%%%%%%%%%%%%%%%%%%%%%%%%%%%%%%%%%%%%%%%%%%%%%%%
\begin{comment}
\section{Division algorithm for tropical polynomials}
\label{appendix: division algorithm}

Consider a monic polynomial $p=\sum c_iT^i$ of degree $n$ over $\T$ with roots $a_1\leq \dotsb\leq a_n$, counted with multiplicities, i.e.\ $p\in\prod(T+a_i)$. Let $a\in\{a_1,\dotsc,a_n\}$ be a root of multiplicity $m$, i.e.\ 
\[
 a \ = \ a_k \ = \ \dotsc \ = \ a_{k+m-1}
\]
for some $k$. In addition, we have $a_{k-1}<a_k$ if $k\geq2$ and $a_{k+m-1}<a_{k+m}$ if $k+m\leq n$. We can determine a monic polynomial $q=\sum d_iT^i$ of degree $n-1$ with $p|(q+a)q$ by the following recursive definition where we write $\overline a_i$, $\overline c_i$ and $\overline d_i$ whenever we use the semifield operations of $\overline\R$.
\begin{enumerate}
 \item\label{algo1} If $k\geq2$, then let $\overline d_{n-1}=\overline c_n$. For $i=n-2,\dotsc,n-k+1$, we define (in decreasing order)
       \[
        \overline d_i \ = \ \min\{\overline c_{i+1},\, \overline d_{i+1}+\overline a\}.
       \]
 \item\label{algo2} If $k+m\leq n$, then let $\overline d_0=\overline c_0-\overline a$. For $i=1,\dotsc,n-k-m$, we define (in increasing order)
       \[
        \overline d_i \ = \ \min\{\overline c_i,\, \overline d_{i-1}\}-\overline a.
       \]
 \item\label{algo3} For $i=n-k-m+1,\dotsc,n-k$, we define
       \[
        \overline d_i \ = \ \overline a_1+\dotsb+\overline a_{n-i-1}.
       \]
\end{enumerate}

\begin{rem}
 The recursion in step \eqref{algo1} stays in a direct analogy with the division algorithm for polynomials over a field, which is given by the formulas $d_{n-1}=c_n$ and $d_i=c_{i+1}+ad_{i+1}$ where $i$ decreases from $n-2$ to $0$. In the tropical setting, step \eqref{algo1} of the algorithm fails to provide the required result if used to define all coefficients of $q$. To achieve $p|(T+a)q$, one needs to define the coefficients $d_i$ for smaller $i$ in terms of step \eqref{algo2}.
 
 In contrast, the coefficients $d_i$ occurring in step \eqref{algo3} could be defined by wither recursion \eqref{algo1} or \eqref{algo2}---all three definitions yield the same result in this case. We chose for the definition as it is because it is explicit and therefore useful for calculation, and it is this form that we use in the proof of Proposition \ref{prop: division algorithm}.
 
 Another facility in calculating the coefficients of $q$ is that whenever $a_i<a_{i+1}$, then $c_{n-i}=s_i$. Thus $\overline d_{n-i-1}=\overline s_i$ if $n-k+1\leq i\leq n-2$ and $\overline d_{n-i}=\overline s_i-\overline a$ if $1\leq i\leq n-k-m$. This means that the recursive definitions in steps \eqref{algo1} and \eqref{algo2} are only needed if multiple zeros other than $a$ occur. In particular, $q$ can be defined explicitly if all zeros of $p$ are simple.
\end{rem}

\begin{prop}\label{prop: division algorithm}
 The polynomial $q$ as defined above satisfies $p|(T+a)q$, i.e.\
 \[
  c_n \ = \ d_{n-1}, \qquad c_0 \ = \ ad_0 \qquad \text{and} \qquad c_i \in (ad_i)\hyperplus d_{i-1} \qquad \text{for} \qquad i=1,\dotsc,n-1.
 \]
\end{prop}

\begin{proof}
 The relations for $c_n$ and $c_0$ follow directly from the definitions of $d_{n-1}$ and $d_0$ (provided that $k\geq2$ and $k+m\leq n$, respectively). Since $c_i \in (ad_i)\hyperplus d_{i-1}$ if and only if the minimum among $c_i$, $ad_{i}$ and $d_{i-1}$ occurs twice, the relation $c_i \in (ad_i)\hyperplus d_{i-1}$ is satisfied for $n-k-2\leq i\leq n-1$ and $1\leq i \leq n-k-m$ by the very definition of $d_{i-1}$ and $d_i$, respectively. 
 
 Since $a=a_{n-i}$ for $n-k-m+2\leq i\leq n-k$, we have that
 \[
  \overline d_i +\overline a \ = \ \overline a_1+\dotsb\overline a_{n-i-1}+\overline a \ = \ \overline d_{i-1}.
 \]
 Thus $ad_i\hyperplus d_{i-1}=[d_{i-1},\infty]$. The relation $p\in\prod(T+a_i)$ means that
 \[
  c_i\geq \underset{e_1<\dotsb<e_{n-i}}{\min} a_{e_1}\dotsb a_{e_{n-i}},
 \]
 and thus $\overline c_i\geq \overline a_1+\dotsb +\overline a_{n-i}=d_{i-1}$. We conclude that $c_i\in ad_i\hyperplus d_{i-1}$, as desired.
 
 We are left with $i=n-k+1$ and $i=n-k-m+1$, which are the critical cases that exhibit the compatibility between the different steps in the division algorithm. 
 
 We begin with some preliminary considerations. For $j=0,\dotsc,n$, we define
 \[
  \overline s_j \ = \ \overline a_1+\dotsb+\overline a_j,
 \]
 which is the value of the $j$-th symmetric polynomial in $n$ variables evaluated in $(\overline a_1,\dotsc,\overline a_n)$. Since $p\in\prod(T+a_i)$, we have $\overline c_i\geq \overline s_{n-i}$ for all $i=0,\dotsc,n$. 
 
 We turn to the case $i=n-k+1$. By the definition in step \eqref{algo3}, $\overline d_{n-k}=\overline s_{k-1}$. Since $a_{k-1}<a_k$, we have $\hypersum a_{e_1}\dotsb a_{e_{k-1}}=\{a_1\dotsb a_{k-1}\}$ and thus $\overline c_{n-k+1}=\overline s_{k-1}=\overline d_{n-k}$. If we can show that $\overline d_{n-k+1}+\overline a\geq \overline s_{k-1}$, then we obtain the desired relation $c_{n-k+1}\in ad_{n-k+1}\hypersum d_{n-k}$.
 
 We claim that $\overline d_{n-j}+\overline a\geq \overline s_{j}$ for $j=1,\dotsc,k-1$, which we will prove by induction on $j$. The case $j=k-1$ is the missing inequality to conclude the proof of the case $j=n-k+1$. For $i=1$, we have $\overline d_{n-1}=0$ and $\overline a \geq \overline a_1$. Thus $\overline d_{n-1}+\overline a\geq \overline s_1$, as claimed. If $1<j\leq k-1$, then $\overline a\geq \overline a_j$. We have $\overline c_{n-j+1}\geq\overline s_{j-1}$, as reasoned before, and $\overline d_{n-j+1}+\overline a\geq \overline s_{j-1}$ by the inductive hypothesis. Thus we obtain
 \[
  \overline d_{n-j}+\overline a \ = \ \min\{\overline c_{n-j+1},\,\overline d_{n-j+1}+\overline a\}+\overline a \ \geq \ \overline s_{j-1}+\overline a_j \ = \ \overline s_j,
 \]
 which verifies our claim and concludes the proof of the case $i=n-k+1$.
 
 We turn to the case $i=n-k-m+1$. Since $a_{k+m-1}<a_{k+m}$, we have $\hypersum a_{e_1}\dotsb a_{e_{k+m-1}}=\{a_1\dotsb a_{k+m-1}\}$ and thus $\overline c_{n-k-m+1}=\overline s_{k+m-1}$. By the definition in step \eqref{algo3} and since $\overline a=\overline a_{k+m-1}$, we have 
 \[
  \overline d_{n-k-m+1}+\overline a \ = \ \overline a_1+\dotsb+\overline a_{k+m-2}+\overline a_{k+m-1} \ = \ \overline s_{k+m-1}.
 \]
 If we can show that $\overline d_{n-k-m}\geq\overline s_{k+m-1}$, then we obtain $c_{n-k-m+1}\in ad_{n-k-m+1}\hyperplus d_{n-k-m}$ as desired.
 
 We claim that $\overline d_j\geq \overline s_{n-j-1}$ for $j=0,\dotsc,n-k-m$, which we will prove by induction on $j$. The case $j=n-k-m$ is the missing inequality to conclude the proof of the case $j=n-k-m+1$.  For $j=0$, we have $\overline d_0=\overline c_0-\overline a\geq\overline s_{n-1}$ since $\overline a\leq\overline a_n$. If $1\leq j\leq n-k-m$, then $a=a_{k+m-1}\leq a_{n-i}$ and thus $\overline s_{n-i}-\overline a\geq \overline s_{n-i-1}$. Since $\overline c_i\geq\overline s_{n-i}$, as noted before, and $\overline d_{i-1}\geq \overline s_{n-i}$ by the inductive hypothesis, we get
 \[
  \overline d_i \ = \ \min\{\overline c_i,\, \overline d_{i-1}\}-\overline a \ \geq \ \overline s_{n-i}-\overline a \geq \overline s_{n-i-1},
 \]
 as claimed. This concludes the proof of the case $i=n-k-m+1$.
\end{proof}

% \end{comment}

%%%%%%%%%%%%%%%%%%%%%%%%%%%%%%%%%%%%%%%%%%%%%%%%%%%%%%%%%%%%%%%%%%%%%%

%%%%%%%%%%%%%%%%%%%%%%%%%%%%%%%%%%%%%%%%%%%%%%%%%%%%%%%%%%%%%%%%%%%%%%%%%%%%%%%%%%%%%%%%%%%%%%%%%%%%%%%%%%%%%%%%%%%%%%%%%%%%%%%%%%%%%%%%%%%%%%%%%%%%%%%%%%%%%%%%%%%%%
%%%%%%%%%%%%%%%%%%%%%%%%%%%%%%%%%%%%%%%%%%%%%%%%%%%%%%%%%%%%%%%%%%%%%%%%%%%%%%%%%%%%%%%%%%%%%%%%%%%%%%%%%%%%%%%%%%%%%%%%%%%%%%%%%%%%%%%%%%%%%%%%%%%%%%%%%%%%%%%%%%%%%

\begin{small}
 \bibliographystyle{plain}
 \bibliography{hyperfield}

\begin{thebibliography}{10}

\bibitem{Baccelli-Cohen-Olsder-Quadrat92}
Fran\c{c}ois~Louis Baccelli, Guy Cohen, Geert~Jan Olsder, and Jean-Pierre
  Quadrat.
\newblock {\em Synchronization and linearity}.
\newblock Wiley Series in Probability and Mathematical Statistics: Probability
  and Mathematical Statistics. John Wiley \& Sons, Ltd., Chichester, 1992.

\bibitem{Baker-Bowler17}
Matthew Baker and Nathan Bowler.
\newblock Matroids over partial hyperstructures.
\newblock {\em Advances in Mathematics}, 343:821--863, 2019.

\bibitem{Baker-Lorscheid18}
Matthew Baker and Oliver Lorscheid.
\newblock The moduli space of matroids.
\newblock Preprint, \arxiv{1809.03542}, 2018.

\bibitem{Casselmann18}
Bill Casselmann.
\newblock Newton polygons.
\newblock Notes, \url{www.math.ubc.ca/~cass/research/pdf/Newton.pdf}, version
  from September 1, 2018.

\bibitem{Connes-Consani11}
Alain Connes and Caterina Consani.
\newblock The hyperring of ad\`ele classes.
\newblock {\em J. Number Theory}, 131(2):159--194, 2011.

\bibitem{Cunninghame-Green-Meijer80}
R.~A. Cuninghame-Green and P.~F.~J. Meijer.
\newblock An algebra for piecewise-linear minimax problems.
\newblock {\em Discrete Appl. Math.}, 2(4):267--294, 1980.

\bibitem{Davvaz-Koushky04}
B.~Davvaz and A.~Koushky.
\newblock On hyperring of polynomials.
\newblock {\em Ital. J. Pure Appl. Math.}, (15):205--214, 2004.

\bibitem{Descartes}
Ren{\'e} Descartes.
\newblock La g\'eom\'etrie.
\newblock Appendix to {D}iscours de la m\'ethode, 1637.

\bibitem{Gauss}
Carl~Friedrich Gau\ss.
\newblock Beweis eines algebraischen {L}ehrsatzes.
\newblock {\em J. Reine Angew. Math.}, 3:1--4, 1828.

\bibitem{Grabiner99}
David~J. Grabiner.
\newblock Descartes' rule of signs: another construction.
\newblock {\em Amer. Math. Monthly}, 106(9):854--856, 1999.

\bibitem{Gua-de-Malves}
Jean Paul~de Gua~de Malves.
\newblock Usages de l'analyse de {D}escartes.
\newblock 1740.

\bibitem{Gunn19}
Trevor Gunn.
\newblock A {N}ewton polygon rule for formally-real valued fields and
  multiplicities over the signed tropical hyperfield.
\newblock Preprint, \arxiv{1911.12274}, 2019.

\bibitem{Izhakian09}
Zur Izhakian.
\newblock Tropical arithmetic and matrix algebra.
\newblock {\em Comm. Algebra}, 37(4):1445--1468, 2009.

\bibitem{Izhakian-Rowen10a}
Zur Izhakian and Louis Rowen.
\newblock Supertropical algebra.
\newblock {\em Adv. Math.}, 225(4):2222--2286, 2010.

\bibitem{Izhakian-Rowen10b}
Zur Izhakian and Louis Rowen.
\newblock Supertropical polynomials and resultants.
\newblock {\em J. Algebra}, 324(8):1860--1886, 2010.

\bibitem{Janic-Rasovi07}
Sanja Jan\v{c}ic-Ra\v{s}ovi\'{c}.
\newblock About the hyperring of polynomials.
\newblock {\em Ital. J. Pure Appl. Math.}, (21):223--234, 2007.

\bibitem{Krasner56}
Marc Krasner.
\newblock Approximation des corps valu\'{e}s complets de caract\'{e}ristique
  {$p\not=0$} par ceux de caract\'{e}ristique {$0$}.
\newblock In {\em Colloque d'alg\`ebre sup\'{e}rieure, tenu \`a {B}ruxelles du
  19 au 22 d\'{e}cembre 1956}, Centre Belge de Recherches Math\'{e}matiques,
  pages 129--206. \'{E}tablissements Ceuterick, Louvain; Librairie
  Gauthier-Villars, Paris, 1957.

\bibitem{Liu19}
Ziqi Liu.
\newblock A few results on associativity of hypermultiplications in polynomial
  hyperstructures over hyperfields.
\newblock Preprint, \arxiv{1911.09263}, 2019.

\bibitem{Lorscheid18}
Oliver Lorscheid.
\newblock Blueprints and tropical scheme theory.
\newblock Lecture notes,
  \url{http://oliver.impa.br/2018-Blueprints/versions/lecturenotes180521.pdf},
  version from May 21, 2018.

\bibitem{Massouros85}
Ch.~G. Massouros.
\newblock Methods of constructing hyperfields.
\newblock {\em Internat. J. Math. Math. Sci.}, 8(4):725--728, 1985.

\bibitem{Neukirch99}
J\"{u}rgen Neukirch.
\newblock {\em Algebraic number theory}, volume 322 of {\em Grundlehren der
  Mathematischen Wissenschaften}.
\newblock Springer-Verlag, Berlin, 1999.

\bibitem{Plus90}
Max Plus.
\newblock Linear systems in $(\textup{max}, +)$-algebra.
\newblock In {\em Proceedings of the 29th Conference on Decision and Control
  (Honolulu)}. 1990.

\bibitem{Rowen19}
Louis Rowen.
\newblock An informal overview of triples and systems.
\newblock In {\em Rings, modules and codes}, volume 727 of {\em Contemp.
  Math.}, pages 317--335. Amer. Math. Soc., Providence, RI, 2019.

\bibitem{Viro11}
Oleg~Ya. Viro.
\newblock On basic concepts of tropical geometry.
\newblock {\em Proc. Steklov Inst. Math.}, 273(1):252--282, 2011.

\end{thebibliography}
\end{small}

\end{document}